\newcommand{\cS}{\mathcal{S}}
\newcommand{\N}{\mathbb{N}}
\def\S{\mathcal S}
\def\qed{\hfill $\Box$\medskip}
\newtheorem{theorem}{Theorem}[section]
\newtheorem{lemma}[theorem]{Lemma}
\newtheorem{example}[theorem]{Example}
\newtheorem{definition}[theorem]{Definition}
\title{Perfect partition of some regular bipartite graphs}
\date{\today}
\author{Chi-Kwong Li$^{1}$\and Jeff Soosiah$^{2}$\and Gexin Yu$^{3}$}
\thanks{$^{1}$ Research supported in part by NSF}
\thanks{$^{2}$ Research supported in part by NSF REU}
\thanks{$^{3}$ Research supported in part by the NSA grant H98230-12-1-0226.}
\address{Department of Mathematics, College of William and Mary, Williamsburg, VA 23187.}\email{ckli@math.wm.edu, jsoosiah@gmail.com, gyu@wm.edu}
\begin{document}
\maketitle

\begin{abstract}
A graph has a perfect partition if all its perfect matchings can be partitioned so that each part is a 1-factorization of the graph.  Let $L_{rm, r}=K_{rm,rm}-mK_{r,r}$.  
We first give a formula to count the number of perfect matchings of $L_{rm, r}$, then show that $L_{6,1}$ and $L_{8,2}$ have perfect partitions. 
\end{abstract}

\section{Introduction} 

It is well-known that every regular bipartite graph has a perfect matching, 
and furthermore, every perfect matching of a regular bipartite graph is in 
a $1$-factorization, that is, a collection of pairwise perfect matchings whose 
union is the original graph.  Here we study the inverse problem: given the perfect 
matchings of a graph, can one partition them into $1$-factorizations? 

This perfect partition problem was introduced in \cite{BCL05}, 
in the language of matrices.  Let $S_n$ be the set of $n\times n$ permutation 
matrices, i.e.,  $(0,1)$-matrix  each of whose row and column contains exactly one $1$.  Let $A$ be an $n\times n$ $(0,1)$-matrix, a permutation matrix $P$  is contained in $A$, denoted by $P<A$, if $A-P$ has nonnegative entries, 
and we let   $$\cS(A)=\{P\in S_n:  P<A \}.$$  
We say that a $(0,1)$-matrix has a perfect partition if $\cS(A)$ 
can be partitioned into subsets so that the sum 
of permutation matrices in each subset is $A$.  

One can see that the two definitions in the preceding paragraphs are equivalent.  
For a regular bipartite graph $G$, let the {\em associated matrix $A(G)$} of $G$ be  
the adjacency matrix so that the rows and columns  are the two parts of $G$.  
Thus a perfect matching in $G$ is a permutation matrix in the  associated matrix 
$A(G)$, and a $1$-factorization of $G$ is a set of permutation matrices 
whose sum is $A(G)$. 

It is not hard to construct bipartite graphs or square matrices which have no perfect 
partition. 

\begin{example} \rm Let $P$ be the permutation matrix corresponding to permutation $(12345)$ in the cycle representation, 
and let $A=I_5+P+P^2$. Then every $1$-factorization of the graph $G(A)$ associated with $A$ 
should contain exactly three perfect matchings, but $G(A)$ contains $13$ perfect matchings. 
\end{example}

The perfect partition problem is interesting for graphs in which every perfect matching is 
in a $1$-factorization.  Because of this, we consider a special kind of regular bipartite graphs 
which contain many perfect matchings. 

For $r, m\in \N$, let $L_{rm,r}=K_{rm, rm}-mK_{r,r}$.  
Denote by $J_n$ the $n\times n$ matrices with all entries equal to 1.
In terms of matrices, that is the matrix 
obtained from  $J_{rm\times rm}$  by replacing the ones on the  $r\times r$ disjoint  submatices 
on the main diagonal by zeros.  Evidently, a permutation matrix 
$P$ satisfies $P < A(L_{n,1})$ if and only if it is a derangement,
i.e., a permutation matrix with zero diagonal entries.

In ~\cite{BCL05}, the authors showed that the number of perfect matchings in $G$ equals to the 
permanent of the associated matrix $A(G)$, and that the permanent of $A(L_{rm,r})$ 
is a multiple of $r(m-1)$ using the Laplace expansion formula for permanent, see 
[\cite{BR}, p199].  Thus $L_{rm,r}$ satisfies the easy necessary condition to have a 
perfect partition.  Here we give a formula to calculate the number of perfect 
matchings in $L_{rm,r}$, which generalizes the formula for the number of derangements [\cite{BR}, p202]: $D_n=n!\sum_{k=0}^n (-1)^k \frac{1}{k!}$. 
 
\begin{theorem}\label{counting}
Let $n=rm$.  Then the number of perfect matchings in $L_{n,r}$ is 
$$|M(L_{n,r})| = a_0n! - a_1(n-1)! + \cdots + (-1)^na_n0!=n!\sum_{k=0}^n \frac{a_k}{k!},$$ 
where $$a_0 + a_1x+ a_2x^2 + \cdots + a_nx^n=\left(\sum_{k=0}^r k!{r\choose k}^2 x^k\right)^m$$ 
\end{theorem}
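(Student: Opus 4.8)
The plan is to identify the count with a permanent and then evaluate that permanent by inclusion--exclusion on the forbidden positions. Write $A=A(L_{n,r})=J_n-B$, where $B=\diag(J_r,\dots,J_r)$ is the block-diagonal $(0,1)$-matrix with $m$ copies of $J_r$ down the main diagonal, so that the support of $B$ records exactly the cells forced to zero in $A$. By the result of \cite{BCL05} quoted above, $|M(L_{n,r})|=\operatorname{per}(A)$, and a permutation matrix $P$ satisfies $P<A$ precisely when $P$ meets none of the $1$-entries of $B$. Thus the quantity to be computed is the number of permutations of $[n]$ that place no symbol on the ``forbidden board'' $F$ consisting of the cells of $B$.

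The next step is to invoke the classical rook-polynomial form of inclusion--exclusion. For a subset $T\subseteq F$, let $N(T)$ be the number of permutation matrices covering every cell of $T$; then $N(T)=(n-|T|)!$ when $T$ is a partial permutation (its cells are pairwise non-attacking) and $N(T)=0$ otherwise. Summing $(-1)^{|T|}$ over all $T\subseteq F$ and collecting by $|T|=k$ yields
$$\operatorname{per}(J_n-B)=\sum_{k=0}^{n}(-1)^k\,r_k\,(n-k)!,$$
where $r_k$ is the number of ways to place $k$ non-attacking rooks on $F$. It therefore remains only to show $r_k=a_k$, after which the asserted alternating sum $a_0 n!-a_1(n-1)!+\cdots+(-1)^n a_n 0!$ follows at once.

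The heart of the matter is the rook count $r_k$. Because $B$ is block-diagonal, two cells lying in different blocks automatically occupy distinct rows and distinct columns, so a non-attacking placement of $k$ rooks on $F$ is exactly an independent choice of non-attacking placements on the $m$ separate $r\times r$ full boards whose rook numbers sum to $k$. Hence the rook generating polynomial of $F$ is the $m$-th power of that of a single $J_r$ block. For one $r\times r$ board the number of placements of $k$ non-attacking rooks is $k!\binom{r}{k}^2$ (choose the $k$ occupied rows, the $k$ occupied columns, and a bijection between them), so
$$\sum_{k=0}^{n} r_k x^k=\left(\sum_{k=0}^{r}k!\binom{r}{k}^2 x^k\right)^{m}=\sum_{k=0}^{n}a_k x^k,$$
giving $r_k=a_k$ and completing the proof. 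I expect no serious obstacle beyond this bookkeeping; the only points that genuinely need care are the multiplicativity of the rook polynomial across the disjoint diagonal blocks and the single-block count $k!\binom{r}{k}^2$. As a sanity check, when $r=1$ the block polynomial is $1+x$, so $a_k=\binom{n}{k}$ and the alternating sum reduces via $\binom{n}{k}(n-k)!=n!/k!$ to $n!\sum_{k=0}^n(-1)^k/k!=D_n$, recovering the derangement count as the theorem claims.
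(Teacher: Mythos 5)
Your proof is correct and follows essentially the same route as the paper's: inclusion--exclusion over non-attacking rook placements on the forbidden diagonal blocks, combined with multiplicativity of the rook polynomial across the $m$ disjoint $r\times r$ blocks. You in fact supply details the paper leaves implicit---the explicit derivation of $\sum_{k}(-1)^k r_k (n-k)!$ from subsets of the forbidden board, and the single-block coefficient $k!\binom{r}{k}^2$---so the argument stands as written.
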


\begin{proof}
View $L_{n,r}$ as a chessboard with forbidden positions at zeroes in the matrix,  we can think of 
a permutation matrix (perfect matching) to be a way to $n$ non-attacking rooks so that none of the
rooks are in those forbidden positions.  Let $a_i$ be the number of ways to place $n$ non-attacking 
rooks so that at least $m$ rooks are in the forbidden positions, then by the Principle of Inclusion 
and Exclusion, 
$$|M(L_{n,r})|=\sum_{i=0}^n (-1)^i a_i (n-i)!.$$

Let $A(x)=\sum_{i}a_ix^i$ be the rook polynomial.  Then $A(x)=B(x)^m$, where 
$B(x)=\sum_{k=0}^r b_k  x^k$ is the rook polynomial to place non-attacking rooks in the 
$r\times r$ matrix, since the ways to  place rooks in the $r$ square matrices do not 
interfere with each other.   Thus we have the theorem. 
\end{proof}

For some special $m$ and $r$, it is not hard to show $L_{rm,r}$ has a perfect partition.  
For example,  let $G=K_{n,n}$ and $A(G)$ be the corresponding matrix of $G$.  
Then every permutation matrix of $A(G)$ corresponds to an element in $S_n$.
Let $P$ be a permutation of order $n$, then the subgroup $H$ generated 
by $P$ gives a $1$-factorization of $G$. The left cosets of $H$ give a perfect partition of $G$. 
So, we have the following, see~\cite{BCL05}.

\begin{theorem}
The complete bipartite graphs $K_{n,n}$ (or $L_{n,0}$) has a perfect partition. 
\end{theorem}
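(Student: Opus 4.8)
The plan is to exploit the group structure of $S_n$ together with the fact that the associated matrix of $K_{n,n}$ is the all-ones matrix $J_n$. First I would record the setup: since every entry of $J_n$ equals $1$, we have $J_n - P \ge 0$ for every permutation matrix $P$, so $\cS(J_n) = S_n$, the full set of all $n!$ permutation matrices. Moreover, because each row of $J_n$ contains exactly $n$ ones, any family of permutation matrices summing to $J_n$ must have exactly $n$ members (the row sums force this). Thus a $1$-factorization of $K_{n,n}$ is precisely a set of $n$ permutation matrices covering each position once, and a perfect partition is a partition of $S_n$ into such $n$-element blocks.

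The core construction is to take $P$ to be the permutation matrix of a full $n$-cycle (a cyclic shift of the coordinates) and set $H = \langle P\rangle = \{I, P, P^2, \dots, P^{n-1}\}$. I would verify that $\sum_{k=0}^{n-1} P^k = J_n$: for a fixed position $(i,j)$, exactly one power $P^k$ has a $1$ there, because the orbit of $i$ under a full $n$-cycle is all of $\{1,\dots,n\}$, so the exponent $k$ is uniquely determined. Hence $H$ is itself a $1$-factorization. It is worth flagging here that what is really needed is a single $n$-cycle rather than merely an element of order $n$; the powers of, say, a product of disjoint shorter cycles of combined order $n$ would not sum to $J_n$.

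To produce the full partition I would pass to left cosets. For any $g\in S_n$, regarded as a permutation matrix, the left coset $gH = \{gP^k : 0\le k\le n-1\}$ satisfies $\sum_{k=0}^{n-1} gP^k = g\bigl(\sum_{k=0}^{n-1}P^k\bigr) = gJ_n = J_n$, where the last equality holds because left-multiplication by a permutation matrix merely permutes the rows of $J_n$ and therefore fixes it. Thus every left coset of $H$ is again a $1$-factorization. Since the left cosets of $H$ partition $S_n = \cS(J_n)$ into $(n-1)!$ blocks of size $n$, each of which is a $1$-factorization, this partition is exactly a perfect partition of $K_{n,n}$.

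I do not expect any step to be a genuine obstacle; the only point requiring care is choosing $P$ to be a full $n$-cycle so that $H$ truly sums to $J_n$, after which the coset argument is forced by the identity $gJ_n = J_n$. The proof is short and structural, with the group-theoretic packaging of $S_n$ doing all of the work.
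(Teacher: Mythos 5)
Your proof is correct and follows essentially the same route as the paper: the cyclic subgroup $H=\langle P\rangle$ generated by an $n$-cycle is a $1$-factorization, and its left cosets partition $\cS(J_n)=S_n$ into $1$-factorizations, giving the perfect partition. Your observation that $P$ must be a genuine $n$-cycle (not merely an element of order $n$) is a worthwhile refinement, since the paper's phrasing ``a permutation of order $n$'' is slightly imprecise on exactly this point.
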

 
By using the cosets of subgroups, it is also shown in~\cite{BCL05} that the set $A_n$ of even permutations has a perfect partition and $L_{2n,n}$ has a perfect partition.  
However, it becomes hard to solve when the permutations in a given
matrix have no group structures.    
In \cite{BCL05}, the authors obtained the following.

\begin{theorem}
The graphs $L_{4,1}, L_{5,1}$ and $L_{6,2}$ have perfect partitions. 
\end{theorem}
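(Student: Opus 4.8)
My plan is to exhibit, for each of the three graphs, an explicit family of $1$-factorizations that partitions $\cS(A)$, where $A$ denotes the associated matrix $A(G)$ of the graph $G$ in question. Each graph is regular: $L_{4,1}=J_4-I_4$ is $3$-regular, while $L_{5,1}=J_5-I_5$ and $L_{6,2}$ are $4$-regular, so every $1$-factorization has $3$, $4$, and $4$ members respectively. By Theorem~\ref{counting} the three permanents are $9$, $44$, and $80$, which factor as $3\cdot3$, $4\cdot11$, and $4\cdot20$; thus I must produce $3$, $11$, and $20$ pairwise disjoint $1$-factorizations covering all perfect matchings. The main tool is the observation that if $P$ is a permutation matrix with $PAP^{T}=A$ and $F$ is a $1$-factorization of $A$, then $\{PQP^{T}:Q\in F\}$ is again a $1$-factorization: conjugation sends each $Q<A$ to $PQP^{T}<A$ (its support lies in the $P$-invariant support of $A$), and it sends the matrix sum $A$ to $PAP^{T}=A$.

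For $L_{4,1}$ no conjugation is even needed. The three cyclic subgroups of order $4$ in $S_4$ are $\langle(1234)\rangle$, $\langle(1243)\rangle$, and $\langle(1324)\rangle$, and the three nonidentity elements of each form a $1$-factorization. Indeed, for any $n$-cycle $c$ the powers $c,c^2,\dots,c^{n-1}$ are derangements, and in row $i$ they place ones in columns $c(i),c^2(i),\dots,c^{n-1}(i)$, which are exactly the $n-1$ off-diagonal columns; hence their permutation matrices sum to $J_n-I_n$. Since every derangement in $S_4$ is either a double transposition (the square of a $4$-cycle) or a $4$-cycle, and each lies in exactly one of the three subgroups, the three resulting $1$-factorizations partition all $9$ derangements.

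For $L_{5,1}$ the same cyclic-shift argument shows that the four nonidentity elements of each of the six cyclic subgroups of order $5$ form a $1$-factorization, and these six groups partition the $24$ five-cycles. The remaining $20$ derangements are precisely the permutations of cycle type $(2,3)$. Here I would find one $1$-factorization $F=\{g_1,g_2,g_3,g_4\}$ of such permutations whose four members lie in four distinct orbits under conjugation by the fixed $5$-cycle $\tau=(12345)$. Since the centralizer of a $(2,3)$-element has order $6$ and meets $\langle\tau\rangle$ trivially, each conjugation orbit on that class has size $5$ and there are exactly four orbits; the five conjugates $F,F^{\tau},F^{\tau^2},F^{\tau^3},F^{\tau^4}$ are then pairwise disjoint $1$-factorizations exhausting the $20$ permutations, giving the partition into $6+5=11$ parts. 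For $L_{6,2}$, realize $\{1,\dots,6\}$ as $\Z/6$ with the forbidden blocks being the residue classes modulo $3$; the translation $\sigma\colon i\mapsto i+1$ is a block-derangement whose allowed powers $\sigma,\sigma^2,\sigma^4,\sigma^5$ cover in each row $i$ exactly the columns $i\pm1,i\pm2$, so they form a seed $1$-factorization. Conjugation by any block-partition–preserving permutation, i.e.\ any element of $S_2\wr S_3$ (order $48$), again carries $1$-factorizations to $1$-factorizations, and I would take a small number of seeds and close the collection under a suitable subgroup so that their orbits are disjoint and account for all $80$ block-derangements in $20$ parts.

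The hard part in the nonabelian cases is the absence of group structure among the permutations that are not pure $n$-cycles: the type-$(2,3)$ derangements for $L_{5,1}$, and the bulk of the block-derangements for $L_{6,2}$, are not supplied by any subgroup whose allowed elements sum to $A$ for free. Consequently the seed $1$-factorizations must be produced by explicit construction or computer search, and the decisive step is checking that the orbits of the seeds under the chosen conjugation action are pairwise disjoint and of the correct total size — that is, that the counts $20=5\cdot4$ and $80=20\cdot4$ are realized orbit by orbit. Once valid seeds are in hand, disjointness and coverage follow from orbit–stabilizer bookkeeping together with the permanent computed in Theorem~\ref{counting}.
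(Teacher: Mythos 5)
The paper itself states this theorem as a result quoted from \cite{BCL05} and gives no proof of it, so your proposal has to stand on its own; as written, it does not. Your general framework is sound: if $PAP^{T}=A$, then conjugation by $P$ carries $1$-factorizations of $A$ to $1$-factorizations of $A$; the counts $9=3\cdot 3$, $44=4\cdot 11$, $80=4\cdot 20$ are correct; and your treatment of $L_{4,1}$ is complete and correct (the three cyclic subgroups of order $4$ really do split the nine derangements into three $1$-factorizations). The gap is in the other two cases. For $L_{5,1}$ you reduce everything to the existence of a seed $1$-factorization $F$ consisting of four type-$(2,3)$ permutations lying in four distinct $\langle\tau\rangle$-conjugation orbits, but you never exhibit $F$. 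The orbit bookkeeping (trivial stabilizers, hence four orbits of size five) shows only that the numbers are \emph{consistent} with such an $F$; it does not show one exists, and existence is exactly what is at stake. This particular gap is fillable: for instance $F=\{(1\,2)(3\,4\,5),\,(1\,3)(2\,5\,4),\,(1\,4)(2\,3\,5),\,(1\,5)(2\,4\,3)\}$ sums to $J_5-I_5$, and its members lie in the four distinct orbits (distinguished by the cyclic distance of the transposed pair and the orientation of the $3$-cycle), after which your argument does complete. But "I would find" is not a proof step.

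For $L_{6,2}$ the gap is more serious, because the plan as sketched cannot work with the one seed you do name. The factorization $\{\sigma,\sigma^{2},\sigma^{4},\sigma^{5}\}$ is stabilized, as a set under conjugation, by the dihedral group generated by $\sigma$ and the map $i\mapsto -i$, which has order $12$ and lies inside the block-preserving group $S_2\wr S_3$ of order $48$; hence its conjugation orbit contains at most $4$ factorizations and covers at most $16$ of the $80$ perfect matchings, all of which are conjugates of $\sigma$ or $\sigma^{2}$ (cycle types $6$ and $3{+}3$). The remaining matchings, including those of other cycle types, require genuinely new seeds, and verifying that the orbits of several seeds under a chosen subgroup are pairwise disjoint and exhaust $\cS(L_{6,2})$ is the actual substance of the theorem --- none of which is carried out. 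Your closing paragraph concedes that the seeds "must be produced by explicit construction or computer search" and that the "decisive step" of checking disjointness remains; that concession identifies precisely what is missing. Until those constructions and checks are supplied, the statement is proved only for $L_{4,1}$.
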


It is challenging to find a perfect partition for $L_{n,1}$ when $n\ge 6$. 
Note that we need to find a perfect partition of the derangements of $n$ elements.  
In \cite{BCL05}, five different strategies were proposed to show  that $L_{6,1}$ has a perfect 
partition, but none of them led to a solution.   
In this paper, we use a different strategy to show that $L_{6,1}$ indeed has a perfect partition.  
It is easy enough to list a perfect partition of $L_{6,1}$ into 53 sets with 5 perfect matchings each.
Nevertheless, we will give a theoretical proof in Section 2, and hope that the
proof techniques can inspire future study on $L_{n,1}$ for $n \ge 7$. (Note that it is not so easy to list $792$ sets with $6$ perfect matchings in the case of $L_{7,1}$.)
Also, we will show that the graph $L_{8,2}$ has a perfect partition in Section 3.
Our construction for $L_{8,2}$ used the perfect partition of $L_{4,1}$ from \cite{BCL05}. 
Again, we hope that the techniques can inspire future advance of the partition problem.

\section{perfect partition of $L_{6,1}$}

The purpose of this section is to prove the following.

\begin{theorem} \label{2.1}
The graph $L_{6,1}$ has a perfect partition.
\end{theorem}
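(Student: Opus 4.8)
The plan is to reformulate the statement purely in terms of derangements and then build the partition by exploiting the symmetries of $L_{6,1}$. Since $r=1$ and $m=6$, the associated matrix is $A(L_{6,1})=J_6-I_6$, which is $5$-regular; its perfect matchings are exactly the $D_6=265$ derangements of $\{1,\dots,6\}$, and a $1$-factorization is a set of five derangements whose permutation matrices sum to $J_6-I_6$ (equivalently, a proper $5$-edge-colouring of $K_{6,6}-I$, or a $6\times 6$ Latin square whose diagonal is constant). The goal is therefore to partition the $265$ derangements into $53$ such quintuples, consistent with the arithmetic $265=53\cdot 5$.

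First I would record the relevant group action. Conjugation $\sigma\mapsto\pi\sigma\pi^{-1}$ by any $\pi\in S_6$ fixes $J_6-I_6$, hence sends derangements to derangements and $1$-factorizations to $1$-factorizations while preserving cycle type; inversion $\sigma\mapsto\sigma^{-1}$ is a further symmetry. I would sort the $265$ derangements by cycle type ($120$ of type $6$, $90$ of type $4\!+\!2$, $40$ of type $3\!+\!3$, and $15$ fixed-point-free involutions of type $2\!+\!2\!+\!2$) and use these orbits for bookkeeping. A natural seed is the cyclic factorization $F_0=\{c,c^2,c^3,c^4,c^5\}$ with $c=(1\,2\,3\,4\,5\,6)$: since $c^k$ maps $i$ to $i+k \pmod 6$, the five nonidentity powers of $c$ cover every off-diagonal cell exactly once, so $F_0$ is a $1$-factorization. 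Its profile is two $6$-cycles, two $3\!+\!3$'s and one $2\!+\!2\!+\!2$, and --- crucially --- no element of type $4\!+\!2$. I would then try to tile most of the $265$ derangements by taking the orbit of $F_0$ and a few further seeds under a carefully chosen subgroup $G\le S_6$ acting by conjugation, selecting $G$ and the seeds so that the resulting $1$-factorizations come out pairwise disjoint.

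The hard part is the \emph{global} consistency of this scheme. Each individual derangement extends to a $1$-factorization in many ways, so producing $53$ quintuples that are simultaneously pairwise disjoint and exhaustive is a packing-and-covering problem that cannot be settled cell by cell; the divisibility $5\mid 265$ is necessary but far from sufficient. In particular the $90$ derangements of type $4\!+\!2$ are missed entirely by the cyclic family and must be absorbed separately, so the various quintuples in the final partition will not share a common cycle-type profile. I expect to handle the leftover derangements by an explicit matching argument: form the hypergraph whose vertices are the as-yet-unassigned derangements and whose hyperedges are the admissible $1$-factorizations among them, and show --- by a Hall-type condition or by a direct, symmetry-reduced case analysis --- that it admits a perfect matching. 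Carrying out that final decomposition, rather than merely verifying the counts, is where the real work lies.
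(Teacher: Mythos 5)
Your setup is correct (the $265$ derangements, the target of $53$ quintuples, the conjugation/inversion symmetries, the cycle-type census), but what you have written is a plan, not a proof: the entire burden of the theorem --- actually exhibiting $53$ pairwise disjoint, exhaustive $1$-factorizations --- is deferred to an unspecified ``Hall-type condition or \dots case analysis'' that you never carry out. There is no Hall-type theorem available here: you need a perfect matching in a hypergraph whose hyperedges are admissible quintuples of derangements, and hypergraph perfect matching admits no analogue of Hall's condition that you could simply cite; you state no sufficient condition and verify nothing. Worse, the seed family you propose cannot do the work you assign to it. A cyclic factorization $\{c,c^2,c^3,c^4,c^5\}$ contains exactly two $6$-cycles and exactly one fixed-point-free involution, and each $6$-cycle generates a unique cyclic subgroup of order $6$; hence covering all $120$ six-cycles by conjugates of $F_0$ would require all $60$ cyclic subgroups, whose factorizations overlap heavily (each $3{+}3$ element lies in three of them, each involution in four). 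Any \emph{pairwise disjoint} subfamily is limited by the $15$ involutions to at most $15$ members, covering at most $30$ of the $120$ six-cycles. So the ``leftover'' you hope to absorb at the end is not merely the $90$ elements of type $4{+}2$: it includes at least $90$ six-cycles plus most of the $3{+}3$ elements and involutions --- essentially the whole problem.

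For comparison, the paper's proof is a fully explicit construction organized by cycle type: $30$ sets each consisting of four $6$-cycles and one $4{+}2$ element whose $2$-cycle contains $1$ (with a lemma showing each $6$-cycle is covered exactly once); $16$ sets of three $4{+}2$ elements and two $3{+}3$ elements, built via a ``pattern''/``zone'' mechanism; $3$ sets of four $4{+}2$ elements and one involution; and $4$ sets of two $3{+}3$ elements and three involutions --- with disjointness and coverage proved lemma by lemma, and the counts $30\cdot 4=120$, $16\cdot 3+3\cdot 4=60$, $16\cdot 2+4\cdot 2=40$, $3+4\cdot 3=15$ confirming exhaustion. Note in particular that most $6$-cycles must sit in factorizations containing \emph{four} $6$-cycles each, which is exactly what the paper's first family achieves and what no family of cyclic factorizations can. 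To salvage your approach you would have to replace the appeal to an abstract matching argument with constructions of comparable explicitness; the symmetry reductions you mention are a reasonable bookkeeping device, but they are the easy part.
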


We divide the proofs of Theorem \ref{2.1} into several lemmas. For notational 
convenience, we will not distinguish between the graph $L_{rm,r}$ and it 
adjacency matrix $A(L_{rm,r})$. 
 
By Theorem~\ref{counting}, $L_{6,1}$ consists of the 265 derangements in $S_6$.  
According to their cycle structure, we see the following.

\begin{lemma}
Let $T$ be the derangements with one $6$-cycle, $C_{33}$ be the derangements with a product of 
two $3$-cycles, $C_{24}$ be the derangements with a product of a $2$-cycle and $4$-cycle, and among
those,  $C_{24}^0$ be the ones with $1$ in the $2$-cycle, and let $C_{222}$ be the derangements 
with a product of three $2$-cycles.   Then $\cS(L_{6,1})=C_6\cup C_{24}\cup C_{33}\cup C_{222}$ and 
furthermore, 
$$|C_6|=120,  |C_{33}|=40,  |C_{24}|=90, |C_{24}^0|=30, |C_{222}|=15.$$
\end{lemma}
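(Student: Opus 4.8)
The plan is to prove this lemma by a direct enumeration based on the cycle
type of a derangement in $S_6$. A derangement is a permutation with no fixed
points, so its cycle type can contain no $1$-cycles. The integer partitions of
$6$ with all parts at least $2$ are exactly $6$, $4+2$, $3+3$, and $2+2+2$;
these correspond precisely to the four classes $C_6$, $C_{24}$, $C_{33}$, and
$C_{222}$. This immediately establishes the disjoint-union decomposition
$\cS(L_{6,1})=C_6\cup C_{24}\cup C_{33}\cup C_{222}$, and the total count should
match the $265$ derangements guaranteed by Theorem~\ref{counting}.

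The key step is then to count each class by the standard formula for the number
of permutations of a fixed cycle type. Recall that the number of permutations of
$n$ elements whose cycle type has $c_j$ cycles of length $j$ is
$n!/\prod_j \left(j^{c_j}\, c_j!\right)$. First I would apply this to each class:
for $C_6$ (a single $6$-cycle) the count is $6!/6=120$; for $C_{24}$ (one
$4$-cycle and one $2$-cycle) it is $6!/(4\cdot 2)=90$; for $C_{33}$ (two
$3$-cycles) it is $6!/(3^2\cdot 2!)=40$; and for $C_{222}$ (three $2$-cycles) it
is $6!/(2^3\cdot 3!)=15$. As a consistency check, I would verify that
$120+90+40+15=265$, matching the derangement count. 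The subcount $|C_{24}^0|=30$
(those with the symbol $1$ lying in the $2$-cycle) requires a small separate
argument: by symmetry the element $1$ lies in the $2$-cycle of a
$C_{24}$-permutation with probability equal to the fraction of positions in the
$2$-cycle, namely $2/6=1/3$, so $|C_{24}^0|=90/3=30$; alternatively, I would
count directly by choosing the partner of $1$ in the $2$-cycle ($5$ ways) and
then arranging the remaining four elements into a $4$-cycle ($3!=6$ ways),
giving $5\cdot 6=30$.

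None of these steps presents a genuine obstacle; the lemma is essentially a
bookkeeping exercise. The only point requiring a little care is the subcount
$|C_{24}^0|$, since one must be careful not to overcount the $4$-cycle
arrangements: a $4$-cycle on four fixed symbols admits $(4-1)!=6$ distinct cyclic
orderings, so the direct count of $5\cdot 6=30$ is correct and agrees with the
symmetry argument. Thus the entire proof reduces to identifying the four
admissible cycle types and applying the cycle-type counting formula, followed by
the short symmetry or direct argument for $C_{24}^0$.
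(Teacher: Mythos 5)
Your proposal is correct and matches the paper's approach: the paper states this lemma as an immediate consequence of classifying derangements of $S_6$ by cycle type (the partitions of $6$ with all parts at least $2$), and your counts via the formula $n!/\prod_j \left(j^{c_j} c_j!\right)$, together with the direct count $5\cdot 3!=30$ for $|C_{24}^0|$, are exactly the bookkeeping the paper leaves implicit. All figures check out, including the consistency check $120+90+40+15=265$ against the derangement count from Theorem 1.2.
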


Our proof of Theorem \ref{2.1}  will use the following partition strategy: 

\begin{enumerate}[$T_1$:]
\item $30$ subsets consisting of four elements from $C_6$ and one element from $C_{24}^0$;
\item $16$ subsets consisting of three elements from $C_{24}-C_{24}^0$ and two elements from $C_{33}$;
\item $3$ subsets consisting of four elements from $C_{24}-C_{24}^0$ and one element from $C_{222}$;
\item $4$ subsets consisting of two elements from $C_{33}$ and three elements from $C_{222}$.
\end{enumerate}


For each $\sigma=(1\,x_2)(x_3\,x_4\,x_5\,x_6)\in C_{24}^0$, let $$f(\sigma)=\{\sigma, 
(1\,x_3\,x_2\,x_5\,x_4\,x_6),
(1\,x_4\,x_2\,x_6\,x_5\,x_3),
(1\,x_5\,x_2\,x_3\,x_6\,x_4),
(1\,x_6\,x_2\,x_4\,x_3\,x_5)\}.$$

\begin{lemma}
The set $T_1=\{f(\sigma): \sigma\in C_{24}^0\}$ gives a perfect partition of $C_{24}^0\cup C_6$.
 \end{lemma}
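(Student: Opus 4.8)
The plan is to verify two separate facts: that each five-element set $f(\sigma)$ is a $1$-factorization of $L_{6,1}$ (its five permutation matrices summing to $J_6-I_6$), and that, as $\sigma$ runs over the $30$ elements of $C_{24}^0$, the sets $f(\sigma)$ partition $C_{24}^0\cup C_6$. First I would check that $f$ is well defined: the $2$-cycle of $\sigma$ is forced to be $(1\,\sigma(1))$, whereas the $4$-cycle $(x_3\,x_4\,x_5\,x_6)$ is only determined up to cyclic rotation, and a direct check shows that rotating it merely permutes the four $6$-cycles cyclically, leaving the set $f(\sigma)$ unchanged.

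Write $\sigma=(1\,x_2)(x_3\,x_4\,x_5\,x_6)$ and let $g_1,\dots,g_4$ denote the four $6$-cycles listed in the definition of $f(\sigma)$. For the $1$-factorization property I would argue symbolically, treating $x_2,\dots,x_6$ as formal symbols: for each $y\in\{1,x_2,\dots,x_6\}$ one lists the five images $\sigma(y),g_1(y),\dots,g_4(y)$ and checks that they are exactly the five elements different from $y$. This is a short finite computation carried out once, uniformly in the symbols, and it shows that each off-diagonal position is covered exactly once, i.e. the matrices sum to $J_6-I_6$; it also confirms $\sigma\in C_{24}^0$ and $g_1,\dots,g_4\in C_6$.

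The main work, and the step I expect to be the real obstacle, is the partition claim: the $30\times 4=120$ six-cycles produced must be pairwise distinct, and since $|C_6|=120$ this is equivalent to their exhausting $C_6$. The key observation is the invariant $\tau^2(1)=x_2$ shared by all four $6$-cycles of $f(\sigma)$. This motivates a reconstruction map $\Phi\colon C_6\to C_{24}^0$,
\[
\Phi(\tau)=\bigl(1\;\;\tau^2(1)\bigr)\bigl(\tau(1)\;\;\tau^4(1)\;\;\tau^3(1)\;\;\tau^5(1)\bigr),
\]
which is well defined because $\tau(1),\dots,\tau^5(1)$ are the five distinct non-fixed points of the $6$-cycle $\tau$, so $\Phi(\tau)$ really is a transposition through $1$ times a disjoint $4$-cycle. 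I would then verify the single identity $\Phi(g_i)=\sigma$ for $i=1,2,3,4$ by a one-line symbolic substitution each; for instance, from $g_1=(1\,x_3\,x_2\,x_5\,x_4\,x_6)$ one reads $\tau(1)=x_3$, $\tau^2(1)=x_2$, $\tau^3(1)=x_5$, $\tau^4(1)=x_4$, $\tau^5(1)=x_6$, which reassembles the $4$-cycle $(x_3\,x_4\,x_5\,x_6)$ and hence $\sigma$.

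With this identity the partition follows by counting. Since $\Phi$ is a genuine function on $C_6$, a coincidence between a six-cycle of $f(\sigma)$ and one of $f(\sigma')$ would force $\sigma=\sigma'$ after applying $\Phi$; hence six-cycles from distinct $\sigma$ are distinct. Within a single $f(\sigma)$ the four six-cycles send $1$ to the distinct values $x_3,x_4,x_5,x_6$, so they too are distinct. Thus all $120$ six-cycles differ, and since $|C_6|=120$ they form precisely $C_6$. Finally, the only element of $f(\sigma)$ that is not a $6$-cycle is $\sigma$ itself, so each element of $C_{24}^0$ lies in exactly one set. Combining, every element of $C_{24}^0\cup C_6$ lies in exactly one $f(\sigma)$, and each $f(\sigma)$ is a $1$-factorization — which is precisely a perfect partition of $C_{24}^0\cup C_6$.
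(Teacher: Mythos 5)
Your proof is correct and takes essentially the same route as the paper: your reconstruction map $\Phi(\tau)=\bigl(1\;\tau^2(1)\bigr)\bigl(\tau(1)\;\tau^4(1)\;\tau^3(1)\;\tau^5(1)\bigr)$ is exactly the paper's observation that, for a given $\tau=(1\,y_2\,y_3\,y_4\,y_5\,y_6)\in C_6$, the four candidate preimages $(1\,y_3)(y_2\,y_5\,y_4\,y_6)$, $(1\,y_3)(y_6\,y_2\,y_5\,y_4)$, $(1\,y_3)(y_4\,y_6\,y_2\,y_5)$, $(1\,y_3)(y_5\,y_4\,y_6\,y_2)$ coincide, so each $6$-cycle is covered by exactly one element of $C_{24}^0$. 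Your version is somewhat more carefully packaged (explicit well-definedness of $f$, symbolic verification of the $1$-factorization, and surjectivity via the count $30\cdot 4=120=|C_6|$ rather than direct coverage), but the underlying idea — reconstructing $\sigma$ from $\tau$ via the invariant $\tau^2(1)=x_2$ — is the same.
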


\begin{proof} It is easy to see that $f(\sigma)$ is a $1$-factorization for any give $\sigma\in C_6$. On the other hand,  given 
$\tau=(1\,y_2\,y_3\,y_4\,y_5\,y_6)\in C_6$,  we find that the only elements in $C_{2,4}^0$ whose images under $f$ contain $\tau$ are $(1\,y_3)(y_2\,y_5\,y_4\,y_6)$,  $(1\,y_3)(y_6\,y_2\,y_5\,y_4)$,  $(1\,y_3)(y_4\,y_6\,y_2\,y_5)$, $(1\,y_3)(y_5\,y_4\,y_6\,y_2)$, but these are all the same element. Thus each $C_6$ element is covered by exactly one $C_{2,4}^0$ element, so that the 30 elements of $C_{2,4}^0$ cover all 120 elements in $C_6$. 
\end{proof}

According to above strategy, here is the partition of $C_{24}^0\cup C_6$. 

\begin{table}[h]
\caption{30 sets in $T_1$, where each uses 4 from $C_6$ and 1 from $C_{24}^0$. } 
\centering 
\begin{tabular} { | r | c c c c c | c | }
\hline
1&(1 2)(3 4 5 6) &(1 3 2 5 4 6)  &(1 4 2 6 5 3)  &(1 5 2 3 6 4) &(1 6 2 4 3 5)\\\hline
2&(1 2)(3 4 6 5) &(1 3 2 6 4 5)  &(1 4 2 5 6 3)  &(1 6 2 3 5 4) &(1 5 2 4 3 6)\\\hline
3&(1 2)(3 5 6 4) &(1 3 2 6 5 4)  &(1 5 2 4 6 3)  &(1 6 2 3 4 5) &(1 4 2 5 3 6)\\\hline
4&(1 2)(3 5 4 6) &(1 3 2 4 5 6)  &(1 5 2 6 4 3)  &(1 4 2 3 6 5) &(1 6 2 5 3 4)\\\hline
5&(1 2)(3 6 5 4) &(1 3 2 5 6 4)  &(1 6 2 4 5 3)  &(1 5 2 3 4 6) &(1 4 2 6 3 5)\\\hline
6&(1 2)(3 6 4 5) &(1 3 2 4 6 5)  &(1 6 2 5 4 3)  &(1 4 2 3 5 6) &(1 5 2 6 3 4)\\\hline
7&(1 3)(2 4 5 6) &(1 2 3 5 4 6)  &(1 4 3 6 5 2)  &(1 5 3 2 6 4) &(1 6 3 4 2 5)\\\hline
8&(1 3)(2 4 6 5) &(1 2 3 6 4 5)  &(1 4 3 5 6 2)  &(1 6 3 2 5 4) &(1 5 3 4 2 6)\\\hline
9&(1 3)(2 5 6 4) &(1 2 3 6 5 4)  &(1 5 3 4 6 2)  &(1 6 3 2 4 5) &(1 4 3 5 2 6)\\\hline
10&(1 3)(2 5 4 6) &(1 2 3 4 5 6)  &(1 5 3 6 4 2)  &(1 4 3 2 6 5) &(1 6 3 5 2 4)\\\hline
11&(1 3)(2 6 5 4) &(1 2 3 5 6 4)  &(1 6 3 4 5 2)  &(1 5 3 2 4 6) &(1 4 3 6 2 5)\\\hline
12&(1 3)(2 6 4 5) &(1 2 3 4 6 5)  &(1 6 3 5 4 2)  &(1 4 3 2 5 6) &(1 5 3 6 2 4)\\\hline
13&(1 4)(2 3 5 6) &(1 2 4 5 3 6)  &(1 3 4 6 5 2)  &(1 5 4 2 6 3) &(1 6 4 3 2 5)\\\hline
14&(1 4)(2 3 6 5) &(1 2 4 6 3 5)  &(1 3 4 5 6 2)  &(1 6 4 2 5 3) &(1 5 4 3 2 6)\\\hline
15&(1 4)(2 5 6 3) &(1 2 4 6 5 3)  &(1 5 4 3 6 2)  &(1 6 4 2 3 5) &(1 3 4 5 2 6)\\\hline
16&(1 4)(2 5 3 6) &(1 2 4 3 5 6)  &(1 5 4 6 3 2)  &(1 3 4 2 6 5) &(1 6 4 5 2 3)\\\hline
17&(1 4)(2 6 5 3) &(1 2 4 5 6 3)  &(1 6 4 3 5 2)  &(1 5 4 2 3 6) &(1 3 4 6 2 5)\\\hline
18&(1 4)(2 6 3 5) &(1 2 4 3 6 5)  &(1 6 4 5 3 2)  &(1 3 4 2 5 6) &(1 5 4 6 2 3)\\\hline
19&(1 5)(2 3 4 6) &(1 2 5 4 3 6)  &(1 3 5 6 4 2)  &(1 4 5 2 6 3) &(1 6 5 3 2 4)\\\hline
20&(1 5)(2 3 6 4) &(1 2 5 6 3 4)  &(1 3 5 4 6 2)  &(1 6 5 2 4 3) &(1 4 5 3 2 6)\\\hline
21&(1 5)(2 4 6 3) &(1 2 5 6 4 3)  &(1 4 5 3 6 2)  &(1 6 5 2 3 4) &(1 3 5 4 2 6)\\\hline
22&(1 5)(2 4 3 6) &(1 2 5 3 4 6)  &(1 4 5 6 3 2)  &(1 3 5 2 6 4) &(1 6 5 4 2 3)\\\hline
23&(1 5)(2 6 4 3) &(1 2 5 4 6 3)  &(1 6 5 3 4 2)  &(1 4 5 2 3 6) &(1 3 5 6 2 4)\\\hline
24&(1 5)(2 6 3 4) &(1 2 5 3 6 4)  &(1 6 5 4 3 2)  &(1 3 5 2 4 6) &(1 4 5 6 2 3)\\\hline
25&(1 6)(2 3 4 5) &(1 2 6 4 3 5)  &(1 3 6 5 4 2)  &(1 4 6 2 5 3) &(1 5 6 3 2 4)\\\hline
26&(1 6)(2 3 5 4) &(1 2 6 5 3 4)  &(1 3 6 4 5 2)  &(1 5 6 2 4 3) &(1 4 6 3 2 5)\\\hline
27&(1 6)(2 4 5 3) &(1 2 6 5 4 3)  &(1 4 6 3 5 2)  &(1 5 6 2 3 4) &(1 3 6 4 2 5)\\\hline
28&(1 6)(2 4 3 5) &(1 2 6 3 4 5)  &(1 4 6 5 3 2)  &(1 3 6 2 5 4) &(1 5 6 4 2 3)\\\hline
29&(1 6)(2 5 4 3) &(1 2 6 4 5 3)  &(1 5 6 3 4 2)  &(1 4 6 2 3 5) &(1 3 6 5 2 4)\\\hline
30&(1 6)(2 5 3 4) &(1 2 6 3 5 4)  &(1 5 6 4 3 2)  &(1 3 6 2 4 5) &(1 4 6 5 2 3)\\\hline
\end{tabular}
\label{table:T1}
\end{table}

\begin{definition}\label{class}
Let $\sigma=(1\,x\,y)(a\,b\,c)\in C_{3,3}$ with $a<b,c$. Then {\em the class} of $\sigma$ is $y$ if $b<c$ and $x$ otherwise. 
\end{definition}

By the definition, a permutation and its inverse have the same class; furthermore, if they are  of class $y$, then one of the them can be written as $(1xy)(abc)$ so that $a<b<c$.   In the following of the paper, we will refer $\sigma$ to be $(1xy)(abc)$ and use $\sigma^{-1}$ to be the inverse. 

Note that if $\sigma=(1xy)(abc)$ has class $y$, then $\sigma^*=(1yx)(abc)$ and its inverse have class $x$.  

Now suppose $\sigma=(1\,x\,y)(a\,b\,c)$  has class $y$. We will choose the subset for $\sigma, \sigma^{-1}$ so that each $C_{2,4}-C_{2,4}^0$ element contains $y$ in the 2-cycle, 
then there are two possible ways to finish determining the $C_{2,4}-C_{2,4}^0$ elements:
\begin{eqnarray}
\{(1\,x\,y)(a\,b\,c), (1\,y\,x)(a\,c\,b), (1\,{\underline a}\,x\,{\underline b}\,)(y\,{\underline c}), (1\,{\underline c}\,x\,{\underline a}\,)(y\,{\underline b}), (1\,{\underline b}\,x\,{\underline c}\,)(y\,{\underline a})\}\label{pattern_abc}\\
\{(1\,x\,y)(a\,b\,c), 
(1\,y\,x)(a\,c\,b), 
(1\,{\underline a}\,x\,{\underline c}\,)(y\,{\underline b}), 
(1\,{\underline b}\,x\,{\underline a}\,)(y\,{\underline c}), 
(1\,{\underline c}\,x\,{\underline b}\,)(y\,{\underline a})\}\label{pattern_acb}
\end{eqnarray}

We introduce the concept of a {\it pattern} to decide the associated subsets with $\sigma\in C_{3,3}$. 

\begin{definition}
Let $\sigma=(1\,x\,y)(a\,b\,c)$ be of class $y$. Then $\sigma$ and $\sigma^{-1}$ have pattern $\beta=(1xy)(wvu)$ where $\{w, v, u\}=\{a, b, c\}$ if and only if $\sigma$ and $\sigma^{-1}$ are associated with the following three elements in $C_{2,4}$:
$$(1wxv)(yu), (1vxu)(yw), (1uxw)(yv).$$  
We will let the pattern for $\sigma^*$ and $(\sigma^*)^{-1}$ be $\beta^{-1}$. 
\end{definition}

For example, if $\sigma=(1xy)(abc)$ of class $y$ has pattern $(1xy)(abc)$, then $\sigma, \sigma^{-1}$ and their associated elements give \eqref{pattern_abc}, and $\sigma^*, (\sigma^*)^{-1}$ and their associated elements give \eqref{pattern_acb}.  




For a given element $\sigma=(1xy)(abc)\in C_{33}$ of class $y$ with pattern $\beta=(1xy)(ab'c')$, where $\{b',c'\}=\{b,c\}$, we can define the set $\mathcal{Z}_y^{\beta}(\sigma)$, {\em the zone $y$} which consists of four subsets of five permutations, according to the following rules:

\begin{enumerate}[(i)]
\item Determine the three elements in $C_{2,4}-C_{2,4}^0$ associated with $\sigma$ and $\sigma^{-1}$;
\item Determine the other three pairs of elements in $C_{3,3}$ with class $y$. By definition, they have the form $\gamma=(1*y)(***)$ (and $\gamma^{-1}$) so that the three elements in the second cycle are in increasing order. 
\item Determine the pattern for each $\gamma$ and $\gamma^{-1}$:  if $\gamma$ is associated with $(1uvw)(yk)\in C_{2,4}$ and $\sigma$ is associated with $(1u'xw')(yk)\in C_{2,4}$, then $(u'xw')=(uvw)$ but $u'xw'\not=uvw$. 
\item Write down the elements in $\mathcal{Z}_y^{\beta}(\sigma)$, which are the four pairs of class $y$ elements together with their associated elements in $C_{2,4}-C_{2,4}^0$. 
\end{enumerate}

For example, if we take $\sigma=(123)(465)$ with pattern $(132)(465)$, then we will get zone $2$ (note that $(123)(465)$ is of class $2$) as follows: \\

\begin{tabular} { | c c| c c c c | c | }
\hline
 (1 2 3)(4 6 5) &(1 3 2)(4 5 6) &  & (1 6 3 5)(2 4)&   (1 4 3 6)(2 5) &(1 5 3 4)(2 6)\\\hline
 (1 2 4)(3 6 5) &(1 4 2)(3 5 6) &(1 5 4 6)(2 3)&&(1 6 4 3)(2 5)&(1 3 4 5)(2 6) \\\hline 
 (1 2 5)(3 6 4) &(1 5 2)(3 4 6) &(1 6 5 4)(2 3)&(1 3 5 6)(2 4) &&(1 4 5 3)(2 6)\\\hline
 (1 2 6)(3 5 4) &(1 6 2)(3 4 5) &(1 4 6 5)(2 3)&(1 5 6 3)(2 4)&(1 3 6 4)(2 5) &\\\hline
 \end{tabular}
 
 \bigskip

\begin{lemma}
For a given $\sigma\in C_{3,3}$ of class $y>1$ with a given pattern $\beta$, $\mathcal{Z}_y^{\beta}$ consists of four disjoint subsets of five distinct permutations.  Furthermore, if $\sigma, \gamma\in C_{3,3}$ are two elements in the subsets of $\mathcal{Z}_y$ with patterns $\beta_{\sigma}$ and $\beta_{\gamma}$, respectively, then $\mathcal{Z}_y^{\beta_{\sigma}}(\sigma)=\mathcal{Z}_y^{\beta_{\gamma}}(\gamma)$.\end{lemma}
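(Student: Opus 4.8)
The plan is to prove the two assertions separately, using throughout the explicit descriptions \eqref{pattern_abc} and \eqref{pattern_acb}. Fix the class $y$ and set $E_y=\{2,\dots,6\}\setminus\{y\}$, a four-element set. The organizing observation is that every element of $C_{2,4}-C_{2,4}^0$ whose $2$-cycle contains $y$ is uniquely of the form $(1\,p\,x\,r)(y\,k)$, and the tuple $(p,x,r,k)$ then lists the four symbols of $E_y$; I record the sign $\varepsilon(p,x,r,k)\in\{+1,-1\}$ of this listing relative to the increasing order of $E_y$. There are $24$ such elements, twelve of each sign. On the $C_{3,3}$ side, the class-$y$ elements split into four inverse-pairs indexed by the symbol $x\in E_y$ sitting in the non-$y$ slot of the first cycle, with canonical representative $(1\,x\,y)(a\,b\,c)$, $a<b<c$.

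For the first assertion I would first confirm that each of the four subsets consists of five distinct permutations. A subset is $\{\gamma,\gamma^{-1}\}$ together with three elements of $C_{2,4}$; here $\gamma$ has order three so $\gamma\neq\gamma^{-1}$, the three $C_{2,4}$ entries carry the three distinct $2$-cycles $(y\,u),(y\,v),(y\,w)$ and are therefore distinct, and the cycle types separate the $C_{3,3}$ entries from the $C_{2,4}$ entries. Pairwise disjointness of the four subsets then follows from a single indexing observation: in the $x$-subset every $C_{3,3}$ entry has first cycle $(1\,x\,y)$ and every $C_{2,4}$ entry has $x$ as the middle symbol of its $4$-cycle $(1\,p\,x\,r)$, so these distinguished positions carry four distinct values of $x$ and no permutation can appear in two subsets. (That each subset is in fact a $1$-factorization is already guaranteed by the forms \eqref{pattern_abc} and \eqref{pattern_acb}.)

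For the second assertion I would give a seed-free description of the zone and read well-definedness off it. From \eqref{pattern_abc} and \eqref{pattern_acb} the three $C_{2,4}$ elements attached to one pair are cyclic rotations of one another in the relevant slots, hence share a common sign $\varepsilon$, while the two admissible patterns of a pair yield the two opposite signs. The force of rule (iii) is precisely that two pairs sharing a $2$-cycle value $k$ have their attached $4$-cycles related by a cyclic rotation of the first three entries of $(p,x,r,k)$; such a rotation is an even permutation and so preserves $\varepsilon$. Propagating rule (iii) across the four pairs therefore forces all twelve $C_{2,4}$ entries of the zone to carry the same sign $\varepsilon$; since exactly three of the six arrangements with a fixed middle symbol have sign $\varepsilon$, these twelve entries are distributed three to each value of $x$ and exhaust all sign-$\varepsilon$ elements. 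Thus the $C_{2,4}$-part of the zone is intrinsically the set of all sign-$\varepsilon$ elements and the $C_{3,3}$-part is the set of all four class-$y$ pairs; both are independent of the chosen seed. Consequently any $\gamma$ in the zone, equipped with its induced pattern $\beta_\gamma$, has an attached triple of the same sign and regenerates this same set, giving $\mathcal{Z}_y^{\beta_\sigma}(\sigma)=\mathcal{Z}_y^{\beta_\gamma}(\gamma)$.

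I expect the sign bookkeeping in rule (iii) to be the main obstacle: I must check that the linkage ``$(u'xw')=(uvw)$ with $u'xw'\neq uvw$'' is indeed a nontrivial cyclic rotation of three slots (hence even), and that the pattern $\beta_\gamma$ read off from $\gamma$'s attached $C_{2,4}$ triple agrees on the nose with the pattern that rule (iii) assigns to $\gamma$, not merely in sign. Once these two compatibility checks are secured, the parity invariant carries the argument and only the routine counting above remains.
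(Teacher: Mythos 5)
Your proposal is correct, and it takes a genuinely different route from the paper. The paper proves well-definedness by a direct local computation: it takes two associated elements $(1\,u\,x\,v)(y\,k)$ and $(1\,v\,x\,k)(y\,u)$ of the seed $\sigma$, applies rule (iii) to each to get candidate patterns $(1\,v\,y)(x\,u\,k)$ and $(1\,v\,y)(k\,x\,u)$ for $\gamma$, and observes these are the same cyclic word; the ``furthermore'' part is then dismissed with ``one can readily verify.'' You instead introduce a parity invariant: writing each relevant $C_{2,4}$ element as $(1\,p\,x\,r)(y\,k)$ and taking the sign of the listing $(p,x,r,k)$ of $E_y$, the two admissible patterns of a class-$y$ pair have triples of opposite constant sign (they differ by one transposition of slots, while the three elements within a triple differ by $3$-cycles on slots), and the rule (iii) linkage is a nontrivial cyclic rotation of three slots, hence even and sign-preserving. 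Your two flagged compatibility checks do go through: the linkage is forced to be the unique nontrivial rotation placing the correct middle symbol, and since the six elements with a fixed middle symbol split into the two pattern-triples exactly along the two signs, both rule-(iii)-determined elements for a pair land in the same sign-$\varepsilon$ triple and hence induce one and the same pattern. This yields the seed-free description (zone $=$ all four class-$y$ inverse pairs together with the twelve sign-$\varepsilon$ elements, grouped by middle symbol), from which disjointness, distinctness, and the ``furthermore'' claim all follow at once. What your approach buys is a genuine proof of the ``furthermore'' statement (which the paper only asserts) and an intrinsic characterization of the zones; what the paper's buys is brevity, since its single cyclic-word computation avoids any sign bookkeeping. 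One can confirm your invariant against the paper's displayed zone~$2$: all twelve of its $C_{2,4}$ entries are even listings of $\{3,4,5,6\}$, exactly half of the $24$ possibilities, as your argument predicts.
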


\begin{proof}
By definition, the three elements in $C_{3,3}$ are determined. Also, once we know the patterns, then the associated elements in $C_{2,4}$ are also determined.  So we only need to show that the patterns are well-defined as well.  Let $(1uxv)(yk), (1vxk)(yu)\in C_{2,4}$ be associated with $\sigma$ which are used to determine the pattern of $\gamma$.  First, $\gamma=(1vy)(***)$.   So the corresponding associated elements from $C_{2,4}$ are $(1xvu)(yk)$ and $(1kvx)(yu)$, and the patterns are $(1vy)(xuk)$ and $(1vy)(kxu)$ which are the same.

For the ``furthermore'' part, we just need to show that $\sigma$ with pattern $\beta_{\sigma}$ determines $\gamma$ and its pattern $\beta_{\gamma}$, then the converse is also true. 
One can readily verify this statement.
\end{proof}


\begin{lemma}
For each $z\in \{2, 3, 4, 5, 6\}-\{y\}$, $\mathcal{Z}_y^{\beta}(\sigma)$ uniquely determines sets $\mathcal{Z}_z$ so that $\mathcal{Z}_z\cap \mathcal{Z}_y^{\beta}(\sigma)=\emptyset$. Moreover, if $z\not=z'$, then $\mathcal{Z}_z\cap \mathcal{Z}_{z'}=\emptyset$. 
\end{lemma}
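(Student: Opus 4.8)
The plan is to reduce everything to what happens among the $C_{24}-C_{24}^0$ elements, since the $C_{33}$ part of a zone is insensitive to the pattern: by step (ii) of the construction and the preceding lemma, $\mathcal{Z}_y^{\beta}(\sigma)$ always contains exactly the eight class-$y$ elements of $C_{33}$ (the four class-$y$ pairs), so two zones of different classes automatically share no $C_{33}$ element. Moreover a $C_{24}-C_{24}^0$ element is associated in a class-$w$ zone only when $w$ lies in its $2$-cycle; hence a class-$z$ zone and a class-$z'$ zone can meet only in elements whose $2$-cycle is exactly $\{z,z'\}$. For each pair $\{i,j\}\subseteq\{2,\dots,6\}$ there are precisely six such elements (the three non-$1$ entries fill the $4$-cycle, giving $3!=6$), and the association rules make each incident zone use exactly three of them.

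First I would pin down the zones of a fixed class. By the well-definedness lemma any one pair together with its pattern reproduces the whole zone; since there are $4\times 2=8$ pair--pattern data in class $z$ (four pairs, each with $\beta$ or $\beta^{-1}$) and each zone accounts for four of them, there are exactly two class-$z$ zones, say $\mathcal{Z}_z^{+}$ and $\mathcal{Z}_z^{-}$, obtained from one another by replacing every pattern $\beta_i$ with $\beta_i^{-1}$. Flipping the pattern of a pair exchanges its associated $C_{24}$ element on each incident $2$-cycle for the other admissible one, so on every edge $\{z,w\}$ the triples chosen by $\mathcal{Z}_z^{+}$ and $\mathcal{Z}_z^{-}$ are complementary among the six elements with $2$-cycle $\{z,w\}$. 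This already yields the first assertion: $\mathcal{Z}_y^{\beta}(\sigma)$ selects three of the six elements with $2$-cycle $\{y,z\}$, and exactly one of $\mathcal{Z}_z^{\pm}$ selects the complementary three, so there is a unique class-$z$ zone $\mathcal{Z}_z$ with $\mathcal{Z}_z\cap \mathcal{Z}_y^{\beta}(\sigma)=\emptyset$.

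For the ``moreover'' part I would record the two-fold choices as a sign $\epsilon_w$ on each vertex of the complete graph $K_5$ on $\{2,\dots,6\}$, writing $\mathcal{Z}_w=\mathcal{Z}_w^{\epsilon_w}$. Disjointness of $\mathcal{Z}_z$ and $\mathcal{Z}_{z'}$ on the edge $\{z,z'\}$ is then a single relation between $\epsilon_z$ and $\epsilon_{z'}$ (``equal'' or ``opposite'', according to how the two sides split the six common elements). The master zone fixes $\epsilon_y$, and each edge $\{y,z\}$ forces $\epsilon_z$; what must be checked is that these forced values satisfy the relation on every remaining edge $\{z,z'\}$. As the cycle space of $K_5$ is generated by triangles, it suffices to verify consistency around each triangle $\{y,z,z'\}$, i.e.\ that the splits forced on $\{y,z\}$ and $\{y,z'\}$ are compatible on $\{z,z'\}$. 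Once this holds the four forced zones are pairwise disjoint, and the count $12+4\cdot 12=60=|C_{24}-C_{24}^0|$ confirms that they tile the remaining $C_{24}$ elements with no room for overlap.

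The main obstacle is exactly this triangle consistency, and it is delicate because the class is defined through the linear order on $\{2,\dots,6\}$, so there is no class-preserving relabelling symmetry that would collapse the triangles to a single case. I therefore expect to resolve it by a direct check on the explicit associations (the zone $\mathcal{Z}_2$ computed above and its analogues), verifying that the ``opposite''-edges form a cut of $K_5$ so that no triangle carries an odd obstruction; the identity $12+4\cdot 12=60$ serves throughout as both a guide and a final confirmation that the forced zones cover $C_{24}-C_{24}^0$ exactly once.
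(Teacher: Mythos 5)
Your reduction to the $C_{24}-C_{24}^0$ elements is sound: zones of different classes can only meet among the six elements whose $2$-cycle is the pair of classes, each zone selects three of those six, and your counting argument that each class carries exactly two zones, related by inverting every pattern, is correct. However, your proof of the first assertion has a genuine gap at the step ``exactly one of $\mathcal{Z}_z^{\pm}$ selects the complementary three.'' What you have established is only that the triples $T^+,T^-$ chosen by $\mathcal{Z}_z^{+},\mathcal{Z}_z^{-}$ on the edge $\{y,z\}$ are complementary to \emph{each other}; you have not shown that the triple $T_y$ chosen by $\mathcal{Z}_y^{\beta}(\sigma)$ is one of $T^{\pm}$. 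A priori $T_y$ is merely one of the eight transversals of the three inverse-pairs among the six elements with $2$-cycle $\{y,z\}$, and if $T_y$ met both $T^+$ and $T^-$ then \emph{neither} class-$z$ zone would be disjoint from $\mathcal{Z}_y^{\beta}(\sigma)$, so existence would fail. The missing ingredient is step (iii) of the zone construction: within a single zone, the three $C_{24}$ elements sharing a $2$-cycle $(yk)$ have $4$-cycles $(1\,u\,v\,w)$ with one and the same tail $3$-cycle $(u\,v\,w)$. Hence every zone, of either class, induces on the edge $\{y,z\}$ one of only two ``constant-tail'' triples, and these two are complementary; this is what forces $T_y\in\{T^+,T^-\}$ and hence existence. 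You never invoke this property, and without it the step does not follow.

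For the ``moreover'' part, your $K_5$ sign formulation is a reasonable organizing device, but you explicitly defer its crux---consistency of the forced signs around every triangle $\{y,z,z'\}$---to ``a direct check on the explicit associations'' that you do not carry out. That check \emph{is} the mathematical content of the statement; the identity $12+4\cdot 12=60$ cannot substitute for it, since it is equally consistent with two zones overlapping in a whole triple while another pair of zones misses a triple entirely. So as written the proposal does not prove the lemma. By contrast, the paper closes both points with a single mechanism: if the class-$y$ pair $(1\,z\,y)(a\,b\,c)\in\mathcal{Z}_y$ carries pattern $\beta$, then $\mathcal{Z}_z$ is \emph{defined} as the zone generated by the starred pair $(1\,z\,y)(a\,c\,b)$ of class $z$ with pattern $\beta^{-1}$ (unique by the well-definedness lemma), and the same star/pattern-inversion rule applied to a pair of $\mathcal{Z}_z$ is then asserted to reproduce $\mathcal{Z}_{z'}$, which gives pairwise disjointness uniformly rather than edge by edge. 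If you want to keep your route, the remaining work is precisely to prove your edge relations are balanced on each triangle, e.g.\ by tracking how the pattern of the pair with middle $x$ in $\mathcal{Z}_y$ determines, via step (iii) and the star rule, the patterns of the corresponding pairs in $\mathcal{Z}_z$ and $\mathcal{Z}_{z'}$.
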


\begin{proof}
Let $\sigma=(1zy)(abc)\in C_{3,3}$ be in zone $y$ with pattern $\beta$, then $\sigma^*=(1zy)(acb)$ of class $z$ is in zone $z$ with pattern $\beta^{-1}$, thus $\mathcal{Z}_z=\mathcal{Z}_z^{\beta^{-1}}(\sigma^*)$, by the process described above.   By construction and the previous Lemma, the set $\mathcal{Z}_z$ is unique. 

Similarly, suppose that $(1z'z)(abc)\in \mathcal{Z}_z$ is of pattern $\beta_1$, then $(1zz')(abc)\in \mathcal{Z}_{z'}$ is of pattern $\beta_1^{-1}$, so $\mathcal{Z}_z$ determines $\mathcal{Z}_{z'}$. 
\end{proof}

For example, for the previous $\sigma$ and the pattern, we could get zone $3$ as follows:

\bigskip

 \begin{tabular} { | r | c c c c c | c | }
\hline
(1 2 3)(4 5 6) &(1 3 2)(4 6 5) & &(1 5 2 6)(3 4)&(1 6 2 4)(3 5)&(1 4 2 5)(3 6)\\\hline
(1 3 4)(2 6 5) &(1 4 3)(2 5 6) &(1 6 4 5)(3 2)&&(1 2 4 6)(3 5) &(1 5 4 2)(3 6)\\\hline
(1 3 5)(2 6 4) &(1 5 3)(2 4 6)& (1 4 5 6)(32)&(1 6 5 2)(3 4)&&(1 2 5 4)(3 6)\\\hline
(1 3 6)(2 5 4) &(1 6 3)(2 4 5) &(1 5 6 4)(3 2)&(1 2 6 5)(3 4) &(1 4 6 2)(3 5)&\\\hline
\end{tabular}

\bigskip

The following is the zone $4$: 
\bigskip

\begin{tabular} { | r | c c c c c | c | }\hline
(1 2 4)(3 5 6) &(1 4 2)(3 6 5) &&(1 6 2 5)(4 3)&(1 3 2 6)(4 5) &(1 5 2 3)(4 6)\\\hline
(1 3 4)(2 5 6) &(1 4 3)(2 6 5) &(1 5 3 6)(4 2)&&(1 6 3 2)(4 5)&(1 2 3 5)(4 6) \\\hline
(1 4 5)(2 6 3) &(1 5 4)(2 3 6) &(1 6 5 3)(4 2)&(1 2 5 6)(4 3) &&(1 3 5 2)(4 6)\\\hline
(1 4 6)(2 5 3) &(1 6 4)(2 3 5) &(1 3 6 5)(4 2)&(1 5 6 2)(4 3)&(1 2 6 3)(4 5) &\\\hline
\end{tabular}
\bigskip

Here is zone $5$:
\bigskip

\begin{tabular} { |  c c c c c  c | }\hline
(1 2 5)(3 4 6) &(1 5 2)(3 6 4) &&(1 4 2 6)(3 5)&(1 6 2 3)(4 5) &(1 3 2 4)(6 5)\\\hline
(1 3 5)(2 4 6) &(1 5 3)(2 6 4) &(1 6 3 4)(2 5)&&(1 2 3 6)(4 5)&(1 4 3 2)(6 5) \\\hline
(1 4 5)(2 3 6) &(1 5 4)(2 6 3) &(1 3 4 6)(2 5)&(1 6 4 2)(3 5) &&(1 2 4 3)(6 5)\\\hline
(1 6 5)(2 4 3) &(1 5 6)(2 3 4) &(1 4 6 3)(2 5)&(1 2 6 4)(3 5)&(1 3 6 2)(4 5)& \\\hline
\end{tabular}
\bigskip

Now zone $6$:

\bigskip

\begin{tabular} { |  c c c c c  c | }\hline
(1 2 6)(3 4 5) &(1 6 2)(3 5 4) &&(1 5 2 4)(3 6)&(1 3 2 5)(4 6) &(1 4 2 3)(5 6)\\\hline
(1 3 6)(2 4 5) &(1 6 3)(2 5 4) &(1 4 3 5)(2 6)&&(1 5 3 2)(4 6)&(1 2 3 4)(5 6) \\\hline
(1 4 6)(2 3 5) &(1 6 4)(2 5 3) &(1 5 4 3)(2 6)&(1 2 4 5)(3 6) &&(1 3 4 2)(5 6)\\\hline
(1 5 6)(2 3 4) &(1 6 5)(2 4 3) &(1 3 5 4)(2 6)&(1 4 5 2)(3 6)&(1 2 5 3)(4 6)& \\\hline
\end{tabular}

\bigskip

For a given $y_0\in \{2, 3, 4, 5, 6\}$,  the elements in $\cup_{y\in [6]-\{1,y_0\}}\mathcal{Z}_y$ form $T_2$.   We will take the elements in $\mathcal{Z}_y$ together with elements in $C_{2,2,2}$  to form $T_3$ and $T_4$. 

For $\sigma=(1xy_0)(a'b'c')\in C_{3,3}$ of class $y_0$ with pattern $(1xy_0)(abc)$, we let 
$$f(\sigma)=\{\sigma, \sigma^{-1},  (1a)(xb)(y_0c),  (1b)(xc)(y_0a),  (1c)(xa)(y_0b)\}.$$

\begin{lemma}
The set $T_4=\{f(\sigma): \sigma\in C_{3,3} \text{ and of class $y_0$}\}$ is a perfect partition of class $y_0$ elements in $C_{3,3}$ and elements in $C_{2,2,2}$ with no $2$-cycle $(1y_0)$. 
\end{lemma}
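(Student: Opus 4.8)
The plan is to verify two things: that each individual set $f(\sigma)$ is a $1$-factorization (its five members sum to $A(L_{6,1})=J_6-I_6$), and that the four such sets are pairwise disjoint and exhaust exactly the class-$y_0$ elements of $C_{3,3}$ together with the twelve elements of $C_{2,2,2}$ that contain no $2$-cycle $(1\,y_0)$. For the first I would argue structurally. Writing $\sigma=(1\,x\,y_0)(a'b'c')$ with pattern $(1\,x\,y_0)(a\,b\,c)$, the symbols split into two ``triangles'' $\{1,x,y_0\}$ and $\{a,b,c\}$. The members $\sigma,\sigma^{-1}$ act only inside each triangle, and since they restrict to a $3$-cycle and its reverse on each, together they cover all $6+6=12$ directed arcs lying inside the triangles. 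The three involutions $(1a)(xb)(y_0c)$, $(1b)(xc)(y_0a)$, $(1c)(xa)(y_0b)$ match $\{1,x,y_0\}$ to $\{a,b,c\}$ by a Latin square (each of $1,x,y_0$ is paired with each of $a,b,c$ exactly once), so they cover each of the $2\cdot 3\cdot 3=18$ arcs running between the triangles exactly once. As $12+18=30$ exhausts the off-diagonal arcs and the intra- and inter-triangle arcs are disjoint, the five permutations partition the arcs of $J_6-I_6$; hence $f(\sigma)$ is a $1$-factorization of five distinct derangements.

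For the partition I would first count. A canonical class-$y_0$ representative has the form $(1\,x\,y_0)(a\,b\,c)$ with $y_0\in\{1,x,y_0\}$ and $a<b<c$, so it is determined by $x\in\{2,\dots,6\}\setminus\{y_0\}$; there are exactly four, giving four pairs $\{\sigma,\sigma^{-1}\}$ and hence the four sets of $T_4$. Distinct $x$ give distinct triangles $\{1,x,y_0\}$, so the $C_{3,3}$ parts of the four sets are disjoint and cover all eight class-$y_0$ elements once each; since $C_{3,3}$ and $C_{2,2,2}$ have different cycle types, the two halves of the intended partition cannot interfere. Moreover each produced $C_{2,2,2}$ element pairs $1$ with one of $a,b,c$, all different from $y_0$, and so never contains $(1\,y_0)$; thus the $4\times 3=12$ produced elements all lie in the target set, which itself has exactly $12$ members. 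It therefore suffices to prove that these twelve are distinct.

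The main obstacle is exactly this distinctness, and it is where the patterns genuinely enter. A given target $\tau=(1\,z)(y_0\,w)(s\,t)$ can be produced by the set with parameter $x$ only when $\{1,x,y_0\}$ is a transversal of the three $2$-cycles of $\tau$, i.e. only when $x\in\{s,t\}$; hence each $\tau$ has at most two candidate producers. A short computation shows that the set with parameter $s$ produces $\tau$ precisely when the pattern of $\sigma_s$ has second cycle equal, as a cyclic word, to $(z\,t\,w)$, and symmetrically for $t$ with $(z\,s\,w)$. I would then invoke the consistency of patterns established in the preceding lemmas—that once one class-$y_0$ pattern is fixed, the other three patterns of the zone are forced—to conclude that for every $\tau$ exactly one of the two candidate conditions holds. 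Then $\sigma\mapsto(\text{its three }C_{2,2,2}\text{ elements})$ is injective, and the count $12=12$ upgrades this injection to a bijection onto the target set, completing the partition. Concretely, one may instead read the four patterns of zone $y_0$ off the explicit zone tables and check directly (as for $y_0=2$, where the three producers of each fixed partner of $1$ pair $y_0$ with the three remaining symbols in $3$-cyclic fashion) that the twelve elements are distinct.
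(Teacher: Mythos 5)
Your setup is correct, and in places it is more complete than the paper's own proof: the triangle/Latin-square verification that each $f(\sigma)$ is a $1$-factorization, the counts ($4$ sets, $8$ class-$y_0$ elements of $C_{3,3}$, $12$ target elements of $C_{2,2,2}$), the reduction to distinctness of the twelve produced involutions, and the observation that a target $\tau=(1\,z)(y_0\,w)(s\,t)$ has at most two candidate producers $x\in\{s,t\}$, with producer $s$ active precisely when the pattern of $\sigma_s$ is cyclically $(z\,t\,w)$ and producer $t$ active precisely when the pattern of $\sigma_t$ is cyclically $(z\,s\,w)$. The gap is the very next step: you ``invoke the consistency of patterns'' to conclude that exactly one of these two conditions holds, but that conclusion does not follow from the bare fact that one pattern forces the other three; it depends on \emph{what} the forced pattern is. A priori the zone rule could have forced the pattern of $\sigma_t$ to be cyclically $(z\,s\,w)$, in which case $\tau$ would be produced twice and the lemma would be false. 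One must carry out the forcing computation: if $\sigma_s$ has pattern cyclically $(z\,t\,w)$, its associated $C_{2,4}$ elements are $(1\,z\,s\,t)(y_0\,w)$, $(1\,t\,s\,w)(y_0\,z)$, $(1\,w\,s\,z)(y_0\,t)$; the zone rule (same inner word up to cyclic rotation, with $t$ in the middle) then forces $\sigma_t$'s elements with $2$-cycles $(y_0\,z)$ and $(y_0\,w)$ to be $(1\,w\,t\,s)(y_0\,z)$ and $(1\,s\,t\,z)(y_0\,w)$, i.e., pattern cyclically $(w\,s\,z)$, which is \emph{not} cyclically $(z\,s\,w)$. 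This computation is exactly the content of the paper's proof, run in contrapositive form: assuming $(1a)(xb)(y_0c)\in f(\sigma)\cap f(\gamma)$, the two patterns are forced to be $(1\,x\,y_0)(a\,b\,c)$ and $(1\,b\,y_0)(a\,x\,c)$, and then the associated elements $(1\,b\,x\,c)(y_0\,a)$ and $(1\,x\,b\,c)(y_0\,a)$ share a $2$-cycle while their inner words $bxc$ and $xbc$ are not cyclic rotations of one another, contradicting the defining property of $\mathcal{Z}_{y_0}$.

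Your fallback---reading the four patterns off the printed zone tables and checking the twelve involutions directly---does verify the claim, but only for the particular $\sigma$ and pattern that generate those tables; the lemma is stated for an arbitrary $y_0$ and an arbitrary admissible pattern $\beta$, so you would additionally need a relabeling/symmetry argument reducing the general case to the tabulated one. As written, then, the decisive step (mutual exclusivity of the two producer conditions) is asserted rather than proved; supplying either the forcing computation above or the symmetry reduction would close the gap.
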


\begin{proof}
We just need to show that $f(\sigma)\cap f(\gamma)=\emptyset$ if $\sigma\not=\gamma$.  Suppose that $(1a)(xb)(y_0c)\in f(\sigma)\cap f(\gamma)$.  Then $\sigma$ has pattern $(1xy_0)(abc)$ and $\gamma$ has pattern $(1by_0)(axc)$.  Therefore the elements in $C_{2,4}$ associated with $\sigma$ and $\gamma$ are $(1axb)(y_0c), (1bxc)(y_0a), (1cxa)(y_0b)$ and $(1abx)(y_0c), (1xbc)(y_0a), (1cba)(y_0x)$, respectively.   But then we have $(1bxc)(y_0a)$ and $(1xbc)(y_0a)$ in the lists, which is a contradiction to a property of $Z_{y_0}$. 
\end{proof}

For example, if let $y_0=5$, then we have $T_4$ as follows:

\begin{table}[ht]
\centering 
\begin{tabular} { |  c c c c c  | }
\hline
 (12)(43)(56)&(13)(46)(52)&(16)(42)(53)&(1 4 5)(2 3 6)&(1 5 4)(2 6 3)\\\hline
 (12)(64)(53)&(13)(62)(54)&(14)(63)(25)&(1 5 6)(2 3 4)&(1 6 5)(2 4 3)\\\hline
 (12)(36)(54)&(14)(32)(65)&(16)(34)(52)&(1 3 5)(2 4 6)&(1 5 3)(2 6 4)\\\hline
 (13)(2 4)(56)&(14)(26)(53)&(16)(23)(54)&(1 2 5)(3 4 6)&(1 5 2)(3 6 4)\\\hline
\end{tabular}
\label{table:T4}
\end{table} 

Now we define $T_3$.   For $\mu=(1y_0)(xa')(b'c')\in C_{2,2,2}$, we let 
$$f(\mu)=\{\mu, (1b'a'c')(y_0x), (1c'xb')(y_0a'), (1xc'a')(y_0b'), (1a'b'x)(y_0c')\},$$
where $(b'a'c')=(abc)$. 

\begin{lemma}
The set $T_3=\{f(\mu): \mu=(1y_0)(**)(**)\in C_{2,2,2}\}$ is a perfect partition of elements in $C_{2,2,2}$ with a $2$-cycle $(1y_0)$ and the elements in $C_{2,4}$ in $\mathcal{Z}_{y_0}^{\beta}$, where $\beta=(1xy_0)(abc)$.
\end{lemma}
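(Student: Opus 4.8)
The plan is to verify three things: (a) each $f(\mu)$ is a $1$-factorization of $L_{6,1}$; (b) the three sets $f(\mu)$, as $\mu$ runs over the three elements of $C_{2,2,2}$ containing the $2$-cycle $(1y_0)$, are pairwise disjoint; and (c) their union is exactly these three $C_{2,2,2}$ elements together with the twelve $C_{2,4}$ elements lying in $\mathcal{Z}_{y_0}^{\beta}$. Granting (a) and (b), a counting argument settles the covering: the three sets contribute $3\cdot 5=15$ distinct permutations, and $\mathcal{Z}_{y_0}^{\beta}$ has exactly twelve $C_{2,4}$ members, so it suffices to know every permutation produced has the correct type and lands in the target set.

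For (a) I would argue by a direct symbolic check. Writing $\mu=(1y_0)(xa')(b'c')$ and recording, for each of the six symbols, its image under the five permutations of $f(\mu)$, one finds that in every such list the five images are distinct and miss only the symbol itself; e.g. $1$ is sent to $y_0,b',c',x,a'$ and $y_0$ to $1,x,a',b',c'$. Thus the five derangements have pairwise disjoint supports and jointly cover every off-diagonal position once, so $\sum_{P\in f(\mu)}P=J_6-I_6$ and $f(\mu)$ is a $1$-factorization. For (b), the three $C_{2,2,2}$ members $\mu$ are patently distinct, so overlaps can only occur among $C_{2,4}$ elements; since the four $C_{2,4}$ elements of a fixed $f(\mu)$ carry the four distinct $2$-cycles $(y_0x),(y_0a'),(y_0b'),(y_0c')$, each $f(\mu)$ contributes one element with a prescribed $2$-cycle $(y_0s)$, and I would show the three such $4$-cycles (one per $\mu$) are distinct. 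Here the pattern $\beta=(1xy_0)(abc)$ is essential: since $x\notin\{a,b,c\}$ is forced by $\beta$, the three admissible $\mu$ pair $x$ in turn with $a$, $b$, $c$, and the constraint $(b'a'c')=(abc)$ makes the three resulting $4$-cycles the three cyclic rotations of a common triple (for the $2$-cycle $(y_0x)$ they are $(1\,c\,a\,b),(1\,a\,b\,c),(1\,b\,c\,a)$), which are manifestly distinct. Alternatively one can imitate the $T_4$ proof, deducing from a shared $C_{2,4}$ element that two associated elements of $\mathcal{Z}_{y_0}$ coincide, contradicting its established property.

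The remaining and hardest step is to confirm in (c) that each $C_{2,4}$ element generated by some $f(\mu)$ actually lies in $\mathcal{Z}_{y_0}^{\beta}$ rather than in some other zone. Only half of the twenty-four $C_{2,4}$ elements carrying $y_0$ in their $2$-cycle belong to $\mathcal{Z}_{y_0}$, so this is not automatic. My approach is to match the two constructions through $\beta$: the twelve $C_{2,4}$ elements of $\mathcal{Z}_{y_0}^{\beta}$ are those attached, via their patterns, to the four class-$y_0$ pairs of $C_{3,3}$ elements, and I would check that for each $2$-cycle $(y_0s)$ the three $4$-cycles produced by $f$ have three distinct middle symbols, coming one each from the three admissible pairs, and agree with the zone's entries. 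I expect this to be a finite but delicate bookkeeping exercise: once the pattern conventions of the zone are pinned down, it reduces to comparing, $2$-cycle by $2$-cycle, the rotations arising from $f$ against the corresponding column of the $\mathcal{Z}_{y_0}^{\beta}$ table, and this identification is where the main work lies.
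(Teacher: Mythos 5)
Your proposal is correct, and on two of its three points it is actually more explicit than the paper's own proof, but the core step (disjointness) is handled by a genuinely different device. The paper's entire written proof is the disjointness check, done by a single uniform recovery rule: if a $C_{2,4}$ element $(1\,u'\,v'\,w')(y_0\,x')$ lies in $f(\mu)$, then $\mu$ can be read off as $(1\,y_0)(x'\,v')(u'\,w')$ --- the symbol sharing the $2$-cycle with $y_0$ gets paired with the middle symbol of the $4$-cycle, and the two outer symbols of the $4$-cycle get paired together; this holds for all four $C_{2,4}$ elements of $f(\mu)$, so a common element forces $\mu=\rho$ in one line, with no case analysis and no appeal to the normalization $(b'a'c')=(abc)$. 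Your argument instead fixes a $2$-cycle $(y_0s)$ and shows the three $4$-cycles coming from the three admissible $\mu$ are the distinct insertions $(1\,c\,a\,b)$, $(1\,a\,b\,c)$, $(1\,b\,c\,a)$; your computation is right, but it is longer and leans on the pattern constraint, whereas the paper's recovery trick does not. What your approach buys is completeness: you verify that each $f(\mu)$ is a $1$-factorization (your image-list check is correct), and you correctly isolate the covering claim --- that the twelve $4$-cycle elements produced lie in $\mathcal{Z}_{y_0}^{\beta}$ rather than among the other twelve $C_{2,4}$ elements carrying $y_0$ in their $2$-cycle --- as a substantive, non-automatic part of the lemma. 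You leave that last verification as a plan, but so does the paper: its opening ``we just need to show disjointness'' silently presumes both facts, so this is not a defect relative to the paper's standard of rigor. If you do want to close it, the check goes as you suspected: the element of $f(\mu)$ having $x$ in the middle of its $4$-cycle, namely $(1\,c'\,x\,b')(y_0\,a')$, is literally one of the three pattern-$\beta$ templates $(1\,w\,x\,v)(y_0\,u)$, because $(c'\,b'\,a')=(b'\,a'\,c')=(abc)$ says $(c',b',a')$ is a cyclic rotation of $(a,b,c)$; the remaining three elements of $f(\mu)$ match, in the same way, the patterns that $\beta$ induces on the other three class-$y_0$ pairs.
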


\begin{proof}
We just need to show that $f(\mu)\cap f(\rho)=\emptyset$ if $\mu\not=\rho$.  But $(1u'v'w')(y_0x')\in f(\mu)\cap f(\rho)$ only if $\mu=(1y_0)(x'v')(u'w')=\rho$. 
 \end{proof}
 
 So with the chosen $\sigma$ and the pattern, and $y_0=5$, we have $T_3$ as follows:
 \bigskip
\begin{center}\begin{tabular} { |  c c c c c | }
\hline\hline
 (1 5)(2 3)(4 6)&(1 2 4 3)(5 6) &(1 3 6 2)(5 4) &(1 4 2 6)(5 3)&(1 6 3 4)(5 2)\\\hline
 (1 5)(2 4)(3 6)&(1 2 6 4)(5 3) &(1 3 4 6)(5 2) &(1 4 3 2)(5 6)&(1 6 2 3)(5  4)\\\hline
 (1 5)(2 6)(3 4)&(1 2 3 6)(5 4) &(1 3 2 4)(5 6) &(1 4 6 3)(5 2)&(1 6 4 2)(5 3)\\\hline
\end{tabular}\end{center}
\bigskip 
 
By the lemmas, we have constructed a perfect partition of $L_{6,1}$, and the conclusion of
Theorem \ref{2.1} follows.

\section{perfect partition $L_{8,2}$}

The main theorem of this section is the following.

\begin{theorem} \label{3.1}
The graph $L\{8,2)$ has a perfect partition. 
\end{theorem}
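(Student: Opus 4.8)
The plan is to exploit the identity $A(L_{8,2}) = A(L_{4,1}) \otimes J_2 = (J_4 - I_4)\otimes J_2$, which realizes $L_{8,2}$ as the ``doubling'' of $L_{4,1}$: the eight vertices on each side split into four blocks of two, and a permutation matrix $P < A(L_{8,2})$ is precisely a permutation of the eight elements sending no block into itself. First I would apply Theorem~\ref{counting} with $r=2,\ m=4$ (so that the block polynomial is $1+4x+2x^2$ and $\sum a_k x^k = (1+4x+2x^2)^4$) to get $|M(L_{8,2})| = 4752$. Since $L_{8,2}$ is $6$-regular, every $1$-factorization consists of six matchings, so a perfect partition must be a grouping into $4752/6 = 792$ factorizations.

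Next I would attach to each matching $\sigma$ its \emph{block structure} $M(\sigma)$, the $4\times 4$ nonnegative integer matrix recording how many of the two elements of block $b$ are sent into block $b'$. Each $M(\sigma)$ has zero diagonal and all line sums equal to $2$, and the block structures of the six matchings in any $1$-factorization must sum to $4(J_4 - I_4)$. Reading $M(\sigma)$ as a loopless $2$-regular bipartite multigraph on the four blocks, I would classify matchings by its cycle type: the doubled derangements $M = 2P$ (four double edges), and the types carrying at least one simple block edge (one $8$-cycle; a $6$-cycle plus a double edge; two $4$-cycles; or a $4$-cycle plus two double edges).

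The matchings with doubled block structure $M = 2P$ are exactly where the perfect partition of $L_{4,1}$ enters. Here each block $b$ is sent onto block $P(b)$ by one of the two bijections in $S_2$, so $P$ has $2^4 = 16$ realizations, and these pair off into eight ``complementary'' pairs whose within-block assignments sum to $J_2$ in every block. Taking a $1$-factorization $\{P_1,P_2,P_3\}$ of $L_{4,1}$ and combining one complementary pair over each $P_i$ yields six matchings whose block structures $2P_1,2P_1,2P_2,2P_2,2P_3,2P_3$ sum to $4(J_4-I_4)$ and whose within-block parts sum to $A(L_{8,2})$. Running over the three factorizations of the $L_{4,1}$ partition and the eight pairs of each, this tiles all $144$ doubled matchings into $24$ $1$-factorizations.

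For the remaining $4608$ matchings I would work type by type, using the automorphism group of $L_{8,2}$ (which permutes the four blocks and, independently on each side, swaps the two elements within a block) to cut each cycle type down to a handful of representative block structures. For each representative I would build explicit $1$-factorizations by prescribing, alongside the block-level template forced by $\sum M_i = 4(J_4-I_4)$, compatible within-block assignments so that the six chosen matchings sum to $A(L_{8,2})$, in the spirit of the zone constructions of the previous section; the symmetry action then spreads each construction across its orbit. The hard part will be exactly this step: unlike the doubled case, the mixed block structures carry no group structure, and one must simultaneously balance at the block level and within blocks while ensuring the constructions over all types fit together into a single partition of the remaining matchings with no overlap. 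This bookkeeping, rather than any one clever identity, is the real content of the theorem.
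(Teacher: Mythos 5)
Your setup is sound and your treatment of the doubled matchings is correct: the identity $A(L_{8,2})=(J_4-I_4)\otimes J_2$, the count $4752=792\cdot 6$, and the construction pairing complementary realizations of $2P_i$ over a $1$-factorization $\{P_1,P_2,P_3\}$ of $L_{4,1}$ all check out, and in fact this is exactly the paper's ``Type III'' partition of its class $\S_4$ (the $144$ matrices whose four nonzero $2\times 2$ blocks are invertible). But this handles only $144$ of the $4752$ matchings. For the remaining $4608$ --- the paper's classes $\S_0$ (your $8$-cycle and two-$4$-cycle types, $2304$ matrices), $\S_1$ ($6$-cycle plus double edge, $1536$ matrices), and $\S_2$ ($4$-cycle plus two double edges, $768$ matrices) --- you offer only a plan: choose orbit representatives under the automorphism group, build factorizations ``in the spirit of the zone constructions,'' and make everything fit together without overlap. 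You yourself flag this bookkeeping as ``the real content of the theorem,'' and that is precisely the gap: no combination scheme is identified, no construction is given, and no argument shows that the factorizations you would build cover each remaining matching exactly once.

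What is missing, concretely: the paper first fixes which types combine and in what multiplicities. It splits $\S_0$ into $\S_0^1$ (zero blocks in the pattern of a product of two transpositions; your two-$4$-cycle type, $3\cdot 2^8$ matrices) and $\S_0\setminus\S_0^1$ (your $8$-cycle type, $6\cdot 2^8$ matrices), and forms subsets of two shapes: two matrices of $\S_0^1$ with four of $\S_1$, and four matrices of $\S_0\setminus\S_0^1$ with two of $\S_2$. The arithmetic is exactly tight: $768/2=384$ subsets of the first shape consume all $4\cdot 384=1536$ matrices of $\S_1$, and $1536/4=384$ subsets of the second shape consume all $768$ matrices of $\S_2$, giving $384+384+24=792$ subsets in total. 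Existence and disjointness are then proved by explicit determination chains: after the free blocks are chosen, every other block of the six matrices in a subset is forced, so each matrix of $\S_0\cup\S_1\cup\S_2$ lies in exactly one subset. None of this follows routinely from what you wrote --- it is exactly the step you deferred --- so the proposal as it stands establishes only the $\S_4$ portion of the theorem.
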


\noindent
We will use the following notation.

$M_n$: the set of $n\times n$ real matrices,

$\{E_{11}, E_{12}, \dots, E_{nn}\}$: standard basis for $M_n$,

$J_n \in M_n$: the matrix with all entries equal to one,

$O_n\in M_n$: the matrix with all entries equal to zero,



$C(i,j)$: Swap columns $i$ and $j$ in matrix,

$R(i,j)$: Swap rows $i$ and $j$ in matrix.

\begin{lemma}
Suppose a matrix $P \in \S(L_{8,2})$ is witten in 
block form $P = (P_{ij})_{1 \le i,j \le 4}$ so that
$P_{ij} \in M_2$ for every pair $(i,j)$.
Then either none, one, two, or four of the $P_{ij}$ blocks
are invertible, i.e., two of the four entries equal to 1.
Thus, $\S(L_{8,2})$ can be partitioned into 
$\S_0 \cup \S_1 \cup \S_2 \cup \S_4$,
where $\S_k$ consists of matrices 
$P = (P_{ij})_{1 \le i,j\le 4}$
in $\S(L_{8,2})$ 
such that exactly $k$ of the submatrices $P_{ij}$ 
are invertible.
Moreover, we have:
$$|\S_0| = 2^8 9, \quad
|\S_1| = 2^9 3, \quad
|\S_2| = 2^8 3, \quad
|\S_4| = 2^4 9.$$
\end{lemma}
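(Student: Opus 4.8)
The plan is to encode each $P \in \S(L_{8,2})$ by its \emph{block-degree matrix} $Q = (q_{ij})_{1\le i,j\le 4}$, where $q_{ij}$ is the number of $1$'s in the block $P_{ij}$. First I would record the basic constraints: since $P$ is a permutation matrix with zero diagonal blocks, each block-row and each block-column of $Q$ sums to $2$, $q_{ii}=0$, and each $q_{ij}\in\{0,1,2\}$. A block $P_{ij}$ is invertible precisely when $q_{ij}=2$ (two $1$'s inside a $2\times2$ block are forced into distinct rows and columns), so the number of invertible blocks equals the number of entries of $Q$ equal to $2$. Because each off-diagonal row of $Q$ sums to $2$ with entries in $\{0,1,2\}$, every block-row is either \emph{type A} (one entry $2$, rest $0$) or \emph{type B} (two entries $1$, one $0$); the same dichotomy holds for columns. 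Counting the $2$'s by rows and by columns shows the number $k$ of invertible blocks equals the number of type-A rows and also the number of type-A columns.

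The structural heart of the first assertion is to rule out $k=3$. Suppose $k=3$, so three rows are type A and one, say row $4$, is type B. The three $2$'s sit in three distinct columns, which are then exactly the three type-A columns, each of which is ``full'' (its single $2$ forces the rest of that column to be $0$). The type-B row $4$ must place two $1$'s in distinct columns from $\{1,2,3\}$, but every type-A column contributes $0$ to row $4$; since at most one of $\{1,2,3\}$ is type B, row $4$ cannot attain sum $2$ without creating a fourth $2$, a contradiction. Hence $k\in\{0,1,2,4\}$, giving the partition $\S(L_{8,2})=\S_0\cup\S_1\cup\S_2\cup\S_4$.

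For the cardinalities I would separate the count into (a) the number $N_k$ of admissible block-degree matrices with $k$ two-entries, and (b) the number of ways to ``refine'' a fixed admissible $Q$ into an actual $P$. The key refinement lemma is that the single-$1$ blocks form a $2$-regular bipartite graph $H$ on the type-B block-rows and type-B block-columns, and that choosing $P$ amounts to independently selecting, at each vertex of $H$, a bijection between its two incident edges and its two ``slots'' (the rows $2i-1,2i$, resp. columns $2j-1,2j$), together with one of the two $2\times2$ permutation matrices in each invertible block. Every such choice yields a valid permutation matrix and all are distinct, so the number of refinements of $Q$ is $2^{\,k}\cdot 2^{\,2(4-k)}=2^{\,8-k}$, uniformly in $k$. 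I expect this decoupling---verifying that the fine placements really are independent binary choices with no further global obstruction---to be the main technical point, since it is what forces every $|\S_k|$ to be a power of two times a small factor.

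It then remains to compute $N_k$. For $k=0$ the matrix $Q$ is a $\{0,1\}$-matrix with zero diagonal and all line sums $2$; its complement among the off-diagonal cells is a derangement permutation matrix, so $N_0=D_4=9$ and $|\S_0|=9\cdot 2^8=2^8\,9$. For $k=4$ one has $Q=2\Pi$ with $\Pi$ a derangement, so $N_4=9$ and $|\S_4|=9\cdot2^4=2^4\,9$. For $k=1$ the single $2$ at $(r,c)$ with $r\ne c$ forces the remaining $3\times3$ block to be uniquely determined by the zero-diagonal constraint, giving $N_1=4\cdot3=12$ and $|\S_1|=12\cdot2^7=2^9\,3$. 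For $k=2$ the two type-A rows must be disjoint from the two type-A columns (otherwise the forced residual $J_2$ meets a forbidden diagonal cell), yielding $\binom{4}{2}\cdot 2=12$ admissible $Q$ and $|\S_2|=12\cdot2^6=2^8\,3$. As a global check, $|\S_0|+|\S_1|+|\S_2|+|\S_4|=4752$ agrees with the permanent computed from Theorem~\ref{counting}.
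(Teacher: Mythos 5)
Your proof is correct, and at its core it uses the same two-step decomposition as the paper's proof: count block-level patterns, then multiply by the number of ways to refine a pattern into an actual permutation matrix (both routes produce the pattern counts $9, 12, 12, 9$ and refinement factors $2^{8-k}$). The organization, however, is genuinely different, and in one respect yours is more complete. The paper counts each $\S_k$ by ad hoc sequential choices (``there are 4 ways to select two $E$ blocks, 2 ways to select the next four $E$ blocks,\dots'') and never actually proves the first assertion of the lemma --- that exactly three invertible blocks is impossible; your type-A/type-B dichotomy on the block-degree matrix $Q$ supplies a clean proof of precisely this exclusion. Likewise, your uniform refinement lemma ($2^k$ choices inside the invertible blocks times $2^{2(4-k)}$ independent slot assignments at the vertices of the $2$-regular bipartite graph $H$) replaces the paper's case-by-case fine counts with a single argument whose independence claim you justify, and your identification of the admissible $Q$'s with derangements ($N_0=N_4=D_4=9$, and the complement argument for $k=0$) explains structurally why $9$ appears rather than re-deriving it positionally in each case; the closing cross-check against the permanent $4752$ is also absent from the paper. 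One small point worth tightening: in the $k=1$ case, the claimed uniqueness of the completion merits a sentence of justification --- the complement of the residual $3\times 3$ submatrix must be a $3\times 3$ permutation matrix containing the two remaining diagonal cells, hence its third cell, and so the whole submatrix, is forced --- but this is easily supplied and does not affect the result.
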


\begin{proof}
The set $\S_0$ contains the matrices for which no blocks are invertible. 
Then all blocks contain no more than one 1, and each row and column of blocks contain exactly 
two blocks containing exactly one 1. Denote such a block by $E$. For the first block column, there 
are ${3\choose2}=3$ possible choices for which blocks are $E$. This selection determines that the 
block not chosen must be $O_2$, so the other two non-diagonal entries in that row must be $E$. 
The first block row also allows ${3\choose2}=3$ possible choices for which blocks are $X$, then 
the other selections of $E$ are determined. Thus, there are $3\cdot3=9$ combinations of $E$. 
Each $E$ may one of $E_{11}, E_{12}, E_{21}, E_{22}$. There are 4 ways to select two $E$ blocks, 
2 ways to select the next  four $E$ blocks, and 1 way to select the last two. Therefore, 
$|\S_0|=2^8 9$. 

The set $\S_1$ contains the matrices for which exactly one block is invertible. In the $2\times2$ 
case, this is true only when a block is $I_2$ or $R_2$. Denote such a matrix by $X$. Then there are 12 non-diagonal positions for which the first $X$ may be placed. All other blocks $E$ and $O_2$ are 
determined. The single $X$ may be $I_2$ or $R_2$, so it can be chosen in 2 ways. One $E$ block can 
be chosen in 4 ways, the next four $E$ blocks can be each chosen in 2 ways, and the last is determined.
Thus, $|\S_1|=2^8 12=2^9 3$. 

The set $\S_2$ contains the matrices for which exactly two blocks are invertible. Denote such blocks 
by $X$. Then the first $X$ can be placed in one of 12 non-diagonal positions. This placement allows
only 2 ways to choose the other $X$. Since order does not matter, we have $\frac{12\cdot2}{2}=12$ 
ways to choose the placement of two $X$ blocks. There are two choices for each $X$ block, 4 choices 
for the first $E$ block, 2 choices for the next two $E$ blocks, and 1 choice for the last $E$ block. Thus, $|\S_2|=2^6 12=2^8 3$.

The set $\S_4$ contains the matrices for which exactly four blocks are invertible. Denote those four 
by $X$; then all other blocks must be $O_2$. There are 3 ways to place one $X$ in the first block column. Then find the column whose diagonal position is in the same row as the $X$ in the first 
column. There are 3 ways to place one $X$ in this column. All other $X$ blocks are then determined, 
so there are $3\cdot3=9$  ways to place the $X$ blocks. There are 2 ways to choose each $X$, so 
$|\S_4|=2^4 9$.
\end{proof}

\medskip\noindent
{\bf Proof of  Theorem \ref{3.1}.}

We will use the following partitioning scheme for $L_{8,2}$:

\medskip\noindent
\quad {\bf Type I.} Pick two matrices from $\S_0$ and four matrices from $\S_1$ to form subsets.

\medskip\noindent
~~~{\bf Type II.} Pick four matrices from $\S_0$ and two matrices from $\S_2$ to form subsets.

\medskip\noindent
{\bf Type III.} Pick six matrices from $\S_4$ to form subsets.

\medskip
{\bf Pick $2$ matrices from $\S_0$ and $4$ matrices from $\S_1$ to form a subset:}   
In the block form, $P$ has $9$ perfect matchings which form a perfect partition of $3$ subsets of 
$L_{4,1}$:  
\begin{eqnarray*}
\{(1,2)(3,4), (1,3,2,4), (1,4,2,3)\},  \\
\{(1,3)(2,4), (1,2,3,4), (1,4,3,2)\}, \\
\{(1,4)(2,3), (1,2,4,3), (1,3,4,2)\}.
\end{eqnarray*}

Let $\S_0^1\subset \S_0$ be the subset containing all matrices whose non-diagonal zero blocks 
form a perfect matching of the form $(1,2)(3,4), (1,3)(2,4)$, or $(1,4)(2,3)$.

\begin{figure}[ht]
\centering
\includegraphics[scale=0.19]{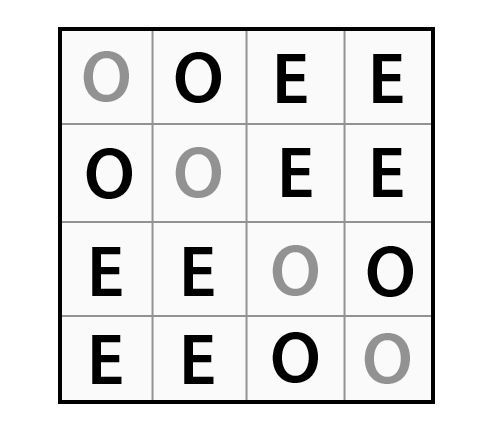}
\includegraphics[scale=0.19]{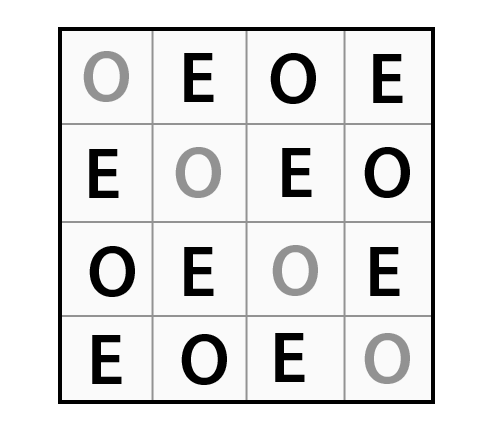}
\includegraphics[scale=0.19]{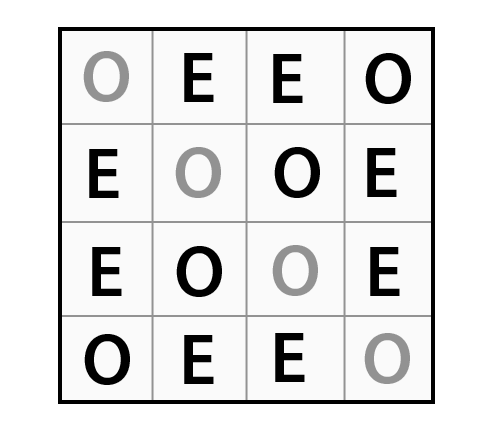}\\
\caption{Forms of the elements of $\S_0^1$: $(1,2)(3,4), (1,3)(2,4)$, or $(1,4)(2,3)$. }
\label{fig:s01}
\end{figure}

We will use $2$ matrices $A_1, A_2\in \S_0^1$ and $4$ matrices $S, T, U, V\in\S_1$ to form the partitions. 

Consider  a block form $(1,i)(j,k)$ now.  Take matrices $P, Q\in \S_0^1$ so that in the sum $A=P+Q$,  
the blocks $A_{1i}, A_{i1}, A_{jk}, A_{kj}$ are all $O_2$, and all other non-diagonal blocks are invertible (here is another way to describe it: choose 
$E_1=P_{1j}, E_2=P_{ik}, E_3=P_{j1}, E_4=P_{ki}$ freely, then $P$ and $Q$ are 
determined to have the desired $A$).

We shall choose $S,T, U, V\in\S_1$ so that their only invertible blocks are 
$S_{1i}, T_{i1},U_{jk}, V_{kj}$ (but we do not know them yet).  It follows that  
the blocks in the same rows and columns as the invertible ones are all $0$s.  
We also see that $S_{i1}, T_{1i}, U_{kj}, V_{jk}$ must be all $0$s, for otherwise, 
another block must be invertible as well.  

Since $A_{1j}$ is invertible and $S_{1j}=V_{1j}=O_2$, $T_{1j}+U_{1j}=J_2-A_{1j}$; and similarly,
$V_{j1}+S_{j1}=J_2-A_{j1}$.  We determine $T_{1j}$ by letting its $1$ to be in the same row as 
$P_{1k}$ and $V_{j1}$ by letting its $1$ to be in the same row as $P_{ji}$. 
Then, the following blocks are determined, based on the the additional fact that 
$S+T+U+V=L_{8,2}-A$:
\begin{equation}\label{det1}
T_{1j}\leftrightarrow T_{1k}\leftrightarrow V_{1k}\leftrightarrow V_{ik}\leftrightarrow 
S_{ik}\leftrightarrow S_{ij}\leftrightarrow U_{ij}\leftrightarrow U_{1j}\leftrightarrow T_{1j}
\end{equation}
\begin{equation}\label{det2}
V_{j1}\leftrightarrow V_{ji}\leftrightarrow T_{ji}\leftrightarrow T_{ki}\leftrightarrow 
U_{ki}\leftrightarrow U_{k1}\leftrightarrow S_{k1}\leftrightarrow S_{j1}\leftrightarrow V_{j1}
\end{equation} 


Next, $S_{jk}$ is determined, as its $1$ is in different row from $S_{j1}$ (which is determined in \eqref{det2}) and in different column from $S_{ik}$ (which is determined in \eqref{det1}).  Then 
$T_{jk}$ and $U_{jk}$ are determined: if $S_{jk}=E_{a,b}$, then $T_{jk}=E_{3-a, 3-b}$ and 
$U_{jk}=J_2-(S_{jk}+T_{jk})$. 

Similarly, we can determine $S_{kj}, T_{kj}, V_{kj}$; $V_{1i}, U_{1i}, S_{1i}$; 
and $V_{i1}, U_{i1},  T_{i1}$.  At the end, we will get the matrices $S,T,U,V$.  

Note that we may get $S, T, U, V$ in a similar way by considering $T_{jk}$ first, then $S_{jk}$ and 
$U_{jk}$.  But it would be the same, since by the chain of determination in \eqref{det1} and 
\eqref{det2}, once we know one block in each chain, we know all other blocks, and since we determine 
$T_{jk}$ or $S_{jk}$ by choosing one block from \eqref{det1} and \eqref{det2},  there is no way we 
could get difference results.





From the above process, once we have chosen the pattern for the zero blocks and 
$P_{1j}, P_{j1}, P_{ik}$, and $P_{ki}$,  all six matrices are uniquely determined.  
By symmetry of $P$ and $Q$, there are 
$3\cdot (4\cdot 4\cdot 4\cdot 4\cdot \frac{1}{2})=3\cdot 2^7$ choices of different pairs $\{P,Q\}$.  
For each pair $\{P,Q\}$ in $\S_0^1$,  four different elements of $\S_1$ are used, so we actually 
use up all elements in $\S_1$. 

\bigskip

{\bf Pick $4$ matrices from $\S_0-\S_0^1$ and $2$ matrices from $\S_2$ to form a subset:} In the block 
form, $P$ has nine perfect matchings which forms perfect partition of three subsets. In partitions for 
$\S_0^1$ and $\S_1$, three perfect matchings were used. This partition will use the remaining six. 
Therefore, no element of $\S_0^1$ (that was used in the above partition) will be used in this 
partition. 

For a perfect matching $(1, i, j, k)$, (whose inverse is $(1, k, j, i)$), we choose a pair 
$A_1,  A_1'\in \S_0-\S_0^1$ so that
\begin{enumerate}[(i)]
\item $A_1$ and $A_1'$ have the same blocks at $(1, j), (j, 1), (k, i), (i, k)$; 
\item the blocks of $A_1$ at $(1, i), (i,j), (j,k), (k,1)$ are $O_2$; 
\item  the blocks of $A_1'$ at $(1, k), (k,j), (j,i), (i,1)$ are $O_2$. 
\end{enumerate}

We get $A_2$ so that $A_1+A_2$ only has zero blocks or invertible blocks,  and get $A_3, A_4$ from 
$A_2$ by applying operations 
$\{R(1,2),R(3,4),R(5,6),R(7,8)\}$ and $\{C(1,2),C(3,4),C(5,6),C(7,8)\}$, respectively.

Let $B_1, B_2\in \S_2$ so that $B_1+B_2=L_{8,2}-\sum_{i=1}^4 A_i$.  Because of the structure of 
matrices in $\S_2$,  $B_1$ and $B_2$ are determined if the sum is known.  Note that $S_{jk}$ is also 
determined by $S_{ij}$ and $S_{k1}$
Similarly we get $A_2', A_3', A_4'$ and $B_1', B_2'$ from $A_1'$. 

Now we show that every matrix in $\S_2\cup (\S_0-\S_0^1)$ appears exactly once in the above 
construction.  Note that for each choices of blocks at positions $(1,j), (j, 1), (k,i), (i,k)$, 
we get two different partitions of $L_{8,2}$,  with eight matrices in $\S_0$ and four in $\S_2$.   
We can partition the matrices in $\S_0-\S_0^1$ into sets of eight matrices, and each set uses four 
matrices in $\S_2$.  So in total $\frac{1}{2}\cdot |\S_0-\S_0^1|=3\cdot 2^8$ matrices in $\S_2$ 
are used, that is, we use up all matrices in $\S_2$. 

\bigskip

{\bf The perfect partitions using only matrices in $\S_4$:}  In the block form,  $P$ has $9$ 
perfect matchings which form a perfect partition with three subsets.    Each block $J_2$ in 
the perfect matchings can be decomposed into two invertible submatrices $I_2$ and $R_2$, so we 
will have six matrices from $\S_4$ summing to $L_{8,2}$.  

For each of the subsets,  we can apply one of the $7$ operations $\{R(1,2)\}$,  $\{R(3,4)\}$, 
$\{R(5,6)\}$, $\{R(7,8)\}$ $\{R(1,2), R(3,4)\}$,  $\{R(1,2), R(5,6)\}$, $\{R(1,2), R(7,8)\}$ 
to get a different partition.  In such a way, we use up all $18\cdot 8=9\cdot 2^4$ matrics in 
$\S_4$ to form perfect partitions.  
\qed


\end{document}